\let\oldthanks\thanks
\def\thanks#1{\oldthanks{\hskip-\parindent #1}}
\def\@setaddresses{%
  \par\nobreak\begingroup\footnotesize\interlinepenalty\@M\bigskip
  \def\author##1{}%
  \def\address##1##2{%
    \par\addvspace\bigskipamount\noindent
    \@ifnotempty{##1}{\textit{##1}: }{\ignorespaces##2}%
  }%
  \def\email##1##2{\ifvmode\else\unskip; \fi\textit{email}: ##2}%
  \addresses\par
  \endgroup
}
\def\thm@setnote#1{(#1)}
\def\thmhead@plain#1#2#3{%
  \thmname{#1}\thmnumber{\@ifnotempty{#1}{ }\@upn{#2}}%
  \thmnote{ {\the\thm@notefont\thm@setnote{#3}}}%
}
\let\thmhead\thmhead@plain
\def\th@algostyle{\def\thm@setnote##1{##1}}
\def\nobreakpar{\par\nobreak\@afterheading}
\newlist{conditions}{enumerate}{1}
\newlist{properties}{enumerate}{1}
\newlist{axioms}{enumerate}{1}
\newlist{steps}{enumerate}{1}
\newlist{options}{enumerate}{1}
\setlist[itemize]{label=\textbullet, leftmargin=\widthof{1.\enspace}}
\setlist[enumerate,conditions,properties,1]{label=\upshape{(\arabic*)}, leftmargin=*}
\setlist[enumerate,2]{label=\upshape{(\arabic{enumi}.\arabic*)}, leftmargin=*}
\setlist[axioms]{label=\upshape{(\roman*)}, leftmargin=*, widest=iii}
\setlist[steps,options]{label=\upshape{\arabic*.}, ref=\upshape{\arabic*}, leftmargin=*}
\crefname{conditionsi}{condition}{conditions}
\crefname{propertiesi}{property}{properties}
\crefname{axiomsi}{axiom}{axioms}
\crefname{stepsi}{step}{steps}
\newtheorem{theorem}{Theorem}
\newtheorem{lemma}[theorem]{Lemma}
\theoremstyle{definition}
\newtheorem{problem}{Problem}
\theoremstyle{algostyle}
\newenvironment{algorithm}[1]{\algoenv[#1]\leavevmode\nobreakpar\smallskip\steps}{\endsteps\endalgoenv}
\newcommand\setR{\mathbb{R}}
\newcommand\famC{\mathcal{C}}
\newcommand\famF{\mathcal{F}}
\newcommand\weight{\mathsf{w}}
\newcommand\Rooted{\mathfrak{R}}
\newcommand\Unrooted{\mathfrak{B}}
\newcommand\Indep{\mathfrak{I}}
\newcommand\adj{\curvearrowright}
\newcommand\rel{\mathrel{R}}
\newcommand\edgeset[1]{E_{\smash{#1}}}
\newcommand\alignto[3]{\makebox[\widthof{$#1$}][#2]{$#3$}}
\newcommand\Oh{\mathcal{O}}
\newcommand\classP{\textup{\sf P}}
\newcommand\classNP{\textup{\sf NP}}
\DeclarePairedDelimiter\set{\{}{\}}
\DeclarePairedDelimiter\size{\lvert}{\rvert}
\let\leq\leqslant
\let\geq\geqslant
\let\epsilon\varepsilon
\begin{document}

\title{Polynomial-time recognition and maximum independent~set in Burling graphs}
\author{Paweł Rzążewski\and Bartosz Walczak}

\address[Paweł Rzążewski]{Faculty of Mathematics and Information Science, Warsaw University of Technology, Warsaw, Poland, and Institute of Informatics, University of Warsaw, Warsaw, Poland}
\email{pawel.rzazewski@pw.edu.pl}
\address[Bartosz Walczak]{Department of Theoretical Computer Science, Faculty of Mathematics and Computer Science, Jagiellonian University, Kraków, Poland}
\email{bartosz.walczak@uj.edu.pl}

\thanks{A preliminary version of this paper appeared in the proceedings of the 51st International Workshop on Graph-Theoretic Concepts in Computer Science (WG 2025)~\cite{RW25}.\\
Paweł Rzążewski was partially supported by the National Science Center of Poland grant 2024/54/E/ST6/00094.\\
Bartosz Walczak was partially supported by the National Science Center of Poland grant 2019/34/E/ST6/00443.}

\begin{abstract}
A Burling graph is an induced subgraph of some graph in Burling's construction of triangle-free high-chromatic graphs.
Equivalently, a Burling graph is a graph that admits a so-called strict frame representation.
We provide a polynomial-time algorithm to decide whether a given graph is a Burling graph and if it is, to construct its strict frame representation.
The representation then enables a polynomial-time algorithm for the maximum independent set problem in Burling graphs.
As a consequence, we establish Burling graphs as the first known hereditary class of graphs that admits such an algorithm while not being $\chi$-bounded.
\end{abstract}

\maketitle

\section{Introduction}

A class of graphs is \emph{$\chi$-bounded} if the chromatic number of the graphs in the class is bounded by some function of the clique number.
In 1965, Burling~\cite{Bur65} constructed an infinite sequence $B_1,B_2,\ldots$ of triangle-free graphs with $\chi(B_k)=k$ that admit an intersection representation by axis-parallel boxes in $\setR^3$, showing that the class of graphs with such representations is not $\chi$-bounded.
This answered a question by Asplund and Grünbaum~\cite{AG60}, who proved in 1960 that the class of graphs with intersection representations by axis-parallel rectangles in $\setR^2$ is $\chi$-bounded.
The importance of Burling's construction has been realized only in the last decade, since Pawlik et~al.\ \cite{PKK+13,PKK+14} devised its intersection representations by various geometric shapes in the plane, such as straight-line segments or axis-parallel rectangular frames.
Such representations imply that Burling's construction excludes induced subdivisions of $1$-subdivided non-planar graphs.
As a consequence, they disprove a conjecture of Scott~\cite{Sco97} that every class of graphs excluding induced subdivisions of a fixed graph is $\chi$-bounded.
This motivated further study of what graphs occur as induced subgraphs in Burling's construction \cite{CELO16,Dav23,PT24a,PT24b}.
Following Pournajafi and Trotignon \cite{PT23,PT24a,PT24b}, we call a graph a \emph{Burling graph} if it is an induced subgraph of $B_k$ for some $k$.

Burling graphs can be characterized in terms of so-called strict frame representations.
A~\emph{frame representation} of a graph $G$ assigns an axis-parallel rectangular frame to every vertex of $G$ so that the edges of $G$ comprise the pairs of vertices whose frames intersect.
Note that a frame is an empty rectangle (excluding the interior), so nested frames do not intersect.
A frame representation is \emph{strict} if the left side of every frame intersects no other frame and lies inside no two intersecting frames (see \Cref{fig:strict}).
It follows from the construction in~\cite{PKK+13} that every Burling graph has a strict frame representation, and Pournajafi and Trotignon~\cite{PT23} proved that a graph is a Burling graph if and only if it admits a strict frame representation.
Various aspects of graphs with frame representations (a.k.a.\ rectangle overlap graphs) were also studied in \cite{CELO16,KPW15,KW17}.

Pournajafi and Trotignon~\cite{PT24a} asked about the complexity of recognizing Burling graphs.
We answer their question by proving that the problem belongs to the complexity class~\classP.\@

\begin{theorem}
\label{thm:recognition-frames}
There is a polynomial-time algorithm that, given a graph\/ $G$, decides whether\/ $G$ is a Burling graph and if it is, produces a strict frame representation of\/ $G$.
\end{theorem}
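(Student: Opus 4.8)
The plan is to work entirely through the characterization of Pournajafi and Trotignon quoted above: a graph is a Burling graph precisely when it admits a strict frame representation. Thus it suffices to give a combinatorial criterion, checkable and witnessable in polynomial time, for the existence of such a representation. To extract combinatorics from geometry, I would fix any strict frame representation, order the vertices left to right by the $x$-coordinate of the left side of each frame (perturbing so that these are distinct), and orient every edge $u\adj v$ from the endpoint with the smaller left coordinate to the one with the larger. The objective of this first part is to read off, from the orientation together with a nesting relation, a finite certificate that determines the representation up to an inconsequential deformation.

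The second step is to unpack exactly what the two strictness conditions force on this oriented, ordered picture. If $u\adj v$, then since the left side of $v$ may not meet any other frame, it cannot cross the boundary of $R_u$; combined with the fact that the frames overlap, this pins the left side of $v$ into the open interior of $R_u$, with the frame of $v$ protruding only to the right of $R_u$. The second strictness condition says that the left side of $v$ lies inside no two intersecting frames, so all in-neighbors of $v$ — whose frames each contain that left side — are pairwise non-crossing, hence pairwise nested. Consequently the in-neighborhood of every vertex is an independent set, linearly ordered by containment of frames. Peeling off a suitably extremal frame (say, the one whose left side is innermost) leaves a strict representation of an induced subgraph, which is the source of a recursive description of the class.

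For the characterization itself I would organize these local constraints into a recursive generation scheme: a class $\Rooted$ of rooted Burling structures built over independent ``root'' families from $\Indep$, with a relation $\rel$ recording how a substructure is nested inside and attached to its parent, and $\Unrooted$ denoting the resulting Burling graphs. The main lemma should state that $G\in\Unrooted$ if and only if $G$ can be generated by these operations, and, for the construction direction, that any such derivation can be realized geometrically: assign $x$-coordinates following the order, nest frames according to the containment chains dictated by the in-neighborhoods, and push each protrusion to the right, then verify that exactly the prescribed adjacencies appear and that both strictness conditions hold. Turning a valid certificate into explicit coordinates is then essentially bookkeeping.

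The algorithm searches for such a certificate, and I expect two points to carry the real difficulty. The first is discretization: proving that continuous geometric realizability is equivalent to a purely combinatorial, finitely checkable condition, so that no hidden metric obstruction can spoil an otherwise consistent certificate when the nested frames are assembled; this is where the strictness hypotheses must be shown to grant exactly the right freedom. The second is taming the search space: a priori there are exponentially many candidate orientations and nestings, so the crux is a canonicity or uniqueness result that lets the correct certificate be computed by peeling extremal vertices and maintaining the rooted substructures together with $\rel$ in a dynamic program over induced subgraphs. I anticipate the single hardest step to be establishing that a locally consistent certificate is always globally realizable; the remaining ingredients — the reduction through the quoted characterization, the extraction of the orientation, and the coordinate reconstruction — should follow more routinely once the right combinatorial invariants are in place.
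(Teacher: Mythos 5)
Your opening reductions are sound and in fact match the paper's starting point: you reduce to strict frame representations via the Pournajafi--Trotignon characterization, and you correctly extract the key local constraint that all frames containing the left side of a given frame are pairwise nested (this is precisely axiom~(ii) of the Burling-set abstraction the paper works with). But from there the proposal is a research plan rather than a proof, and the two points you yourself flag as carrying ``the real difficulty'' are exactly the content that is missing. First, the search space: you say the crux is ``a canonicity or uniqueness result'' enabling a dynamic program ``over induced subgraphs,'' but you never specify the subproblems, and a DP over arbitrary induced subgraphs has exponentially many states. The device your sketch does not find is that the subproblems can be indexed by pairs $(X,S)$ with $X=\emptyset$ or $X=N[v]$ for a \emph{single} vertex $v$ and $S$ a component of $V-X$ (plus, in the rooted variant, a choice of root $r\in V-X$), giving only $\Oh(n^2)$ states; this works because in any solution the attachment boundary $N(S)$ consists entirely of probes, all lying in the neighborhood of one vertex. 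Moreover, the completeness of such a recursion rests on nontrivial structural lemmas --- uniqueness of the root in a connected Burling structure, the fact that no probe can attach to two distinct components after deleting the root, and that the components represented inside the root's frame form a nested family --- each proved from the axioms via a path-tracing argument. Without these, ``peeling extremal vertices'' yields neither a well-defined polynomially bounded recursion nor a proof that locally computed sub-representations compose; the paper needs two separate join lemmas (tracking roots, probes, and exposed elements) to glue solutions of subproblems together.

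Second, realizability. You defer ``turning a valid certificate into explicit coordinates'' to ``essentially bookkeeping,'' but proving that every combinatorially consistent structure is geometrically realizable is itself a substantive step: one must impose order constraints on the $2\size{S}$ horizontal coordinates $\ell_x,r_x$ and prove the resulting relation acyclic, which in the paper requires a delicate case analysis over how a minimal cycle could pass through a $\rel^*$-minimal element, before a topological sort and a DFS-based assignment of $b_x,t_x$ produce the frames and one verifies both strictness conditions. So the gap is twofold and concrete: (a) no polynomially indexed subproblem family together with soundness and completeness of the recursion, and (b) no argument that local consistency implies global geometric realizability. You name both as open difficulties, which is an honest self-assessment, but it means the proposal identifies the right battlefield without fighting the battle: as written it does not constitute a proof of the theorem.
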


\begin{figure}[t]
\begin{center}
\tikzset{every picture/.style={xscale=0.75,yscale=0.5}}
\tikzset{every node/.style={below,yshift=-5pt}}
\begin{tikzpicture}
  \draw (0,0) rectangle (2,3);
  \draw (0.5,0.5) rectangle (3,2.5);
  \node at (1.5,0) {(a)};
\end{tikzpicture}\hskip 1cm
\begin{tikzpicture}
  \draw (0,0) rectangle (2,3);
  \draw (0.5,0.5) rectangle (3,2.5);
  \draw (1,1) rectangle (1.5,2);
  \node at (1.5,0) {(b)};
\end{tikzpicture}\hskip 1cm
\begin{tikzpicture}
  \draw (0,0) rectangle (2,3);
  \draw (0.5,0.5) rectangle (3,2.5);
  \draw (1,1) rectangle (2.5,2);
  \node at (1.5,0) {(c)};
\end{tikzpicture}\hskip 1cm
\begin{tikzpicture}
  \draw (0,0) rectangle (2,3);
  \draw (0.5,0.5) rectangle (3,2.5);
  \draw (1,1) rectangle (4,2);
  \node at (1.85,0) {(d)};
\end{tikzpicture}
\end{center}
\caption{Strict family of frames: (a) required configuration of every intersecting pair of frames; (b)--(d) disallowed configurations.}
\label{fig:strict}
\end{figure}
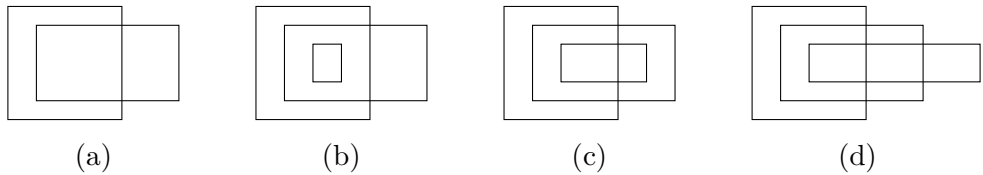

The maximum independent set problem is to find, given an input graph $G$, an independent set in $G$ of maximum size (or maximum weight, when the vertices of $G$ are equipped with weights).
Although the problem is \classNP-hard in general, it might become tractable when the input graphs are ``well structured'', i.e., restricted to a graph class with some strong structural properties.
Indeed, we know many examples of graph classes in which the maximum independent set problem can be solved in polynomial time.
It turns out that these classes are also $\chi$-bounded.
A prominent example of a hereditary graph class with both discussed properties is the class of perfect graphs~\cite{GLS84}.
Numerous other examples can be found among classes that are defined by geometric representations \cite{Gav73,Gya85}, by bounded width parameters \cite{ABC+25,CMR00,DK12,Yol18}, or by excluded induced subgraphs \cite{ACPR24,GKPP22,Gya87,LVV14,LM08,Min80,Sbi80}.

Motivated by such examples, Thomassé, Trotignon, and Vušković~\cite{TTV17} asked whether \emph{every} hereditary class of graphs that admits a polynomial-time algorithm for the maximum independent set problem is $\chi$-bounded.
Being able to construct a strict frame representation, we can solve the maximum independent set problem (with weights) in Burling graphs in polynomial time via a known dynamic programming scheme due to Gavril \cite{Gav73,Gav00}, which was first used in the context of so-called circle graphs.
Therefore, since the class of Burling graphs is hereditary and not $\chi$-bounded, we answer the above-mentioned question in~\cite{TTV17} in the negative.

\begin{theorem}
\label{thm:mis-burling}
There is a polynomial-time algorithm for the maximum independent set problem in Burling graphs with weights.
\end{theorem}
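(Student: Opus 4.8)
The plan is to run the recognition algorithm of Theorem~1 to obtain, in polynomial time, a strict frame representation $(F_v)_{v\in V(G)}$ of $G$, and then to solve the weighted maximum independent set problem on the intersection graph of the frames by a dynamic programming scheme in the spirit of Gavril~\cite{Gav73,Gav00}. Weights enter the recurrence transparently: each frame simply contributes its weight $\weight(v)$ when it is selected, so the only real work is to set up a recursion over geometrically defined subinstances and to argue that polynomially many of them suffice.

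First I would extract from strictness a purely combinatorial description of adjacency. Write $F_v=[\ell_v,r_v]\times[b_v,t_v]$ and assume general position. I claim that two frames $F_u,F_v$ with $\ell_u<\ell_v$ intersect if and only if their $x$-intervals interleave, $\ell_u<\ell_v<r_u<r_v$, and the $y$-interval of the later frame is nested in that of the earlier one, $b_u<b_v<t_v<t_u$. The forward direction is the required configuration of \Cref{fig:strict}(a); for the converse, the decisive point is that the left side $\{\ell_v\}\times[b_v,t_v]$ lies in the $x$-span of $F_u$, so strictness forbids it from meeting the top or bottom of $F_u$, which forces $[b_v,t_v]$ to be either nested inside $(b_u,t_u)$ or disjoint from $[b_u,t_u]$; a short case analysis then shows that the boundaries cross exactly in the nested case. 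In particular, frames with nested $x$-intervals are never adjacent, matching \Cref{fig:strict}(b).

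Next I would build the recursion. Order the $2n$ vertical sides by $x$-coordinate. A subinstance is specified by an $x$-window $(p,q)$ whose endpoints are among these coordinates, together with a $y$-band $(c,d)$ whose endpoints are among the $2n$ horizontal sides, and it asks for a maximum-weight independent set among all frames $F_v$ with $[\ell_v,r_v]\subseteq(p,q)$ whose $y$-interval $[b_v,t_v]$ is \emph{not} nested in $(c,d)$. To solve such a subinstance, let $F_w$ be the frame with the smallest left side inside the window. One branch discards $F_w$ and advances the left endpoint of the window; the other selects $F_w$, adds $\weight(w)$, deletes its neighbours, and --- using the adjacency characterization --- splits what remains into a bounded number of subinstances of the same form: the frames nested in the $x$-interval of $F_w$, those lying entirely to its right, and those interleaving it, the last group being controlled precisely by passing the band $(b_w,t_w)$ to the recursive calls. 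Since the windows and bands range over $\Oh(n^2)$ choices each and each recurrence does $\Oh(n)$ work, the whole computation runs in polynomial time.

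The step I expect to be the main obstacle is exactly this decomposition: showing that selecting the extreme frame really does separate the surviving frames into independent subinstances of the restricted type, with only a constant amount of boundary information --- one $x$-window and one $y$-band --- needing to be carried. The subtlety is caused by the frames whose $x$-interval interleaves that of $F_w$ but whose $y$-interval is disjoint from $(b_w,t_w)$: these are non-neighbours of $F_w$ that straddle its right side and would otherwise link the \emph{inside} and \emph{right} parts, and it is only the strictness-driven nesting dichotomy of the second paragraph that lets a single band tame them. Equivalently, this analysis shows that the intersection graph of a strict frame representation is an interval filament graph, after which Theorem~2 follows directly from Gavril's polynomial-time weighted algorithm for that class~\cite{Gav00}.
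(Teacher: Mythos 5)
Your plan founders exactly at the step you yourself flag as the main obstacle: the window-and-band decomposition is not sound. Fix the selected leftmost frame $F_w$ and classify the survivors: (a) frames inside the rectangle of $F_w$; (b) frames with $x$-interval inside $(\ell_w,r_w)$ but $y$-interval disjoint from $[b_w,t_w]$; (c) frames interleaving $F_w$, which after deleting the neighbours of $F_w$ all have $y$-interval disjoint from $[b_w,t_w]$; (d) frames with $\ell_v>r_w$. Strictness does \emph{not} separate these groups: a type-(b) frame can cross a type-(c) frame (the family $F(0,10,0,10)$, $F(2,6,12,20)$, $F(4,14,14,18)$ is strict, the first frame plays $F_w$, and the last two intersect), and a type-(c) frame can cross a type-(d) frame, so ``nested in the $x$-interval of $F_w$'', ``interleaving it'', and ``to the right'' cannot be recursed on independently. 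Worse, your state type cannot express the calls you need: group (a) requires the condition ``$[b_v,t_v]$ \emph{nested} in $(b_w,t_w)$'', the opposite of your ``not nested''; group (b) satisfies both the inside window $(\ell_w,r_w)$ and the not-nested band condition, so it is covered twice; and type-(d) frames whose $y$-interval happens to lie inside $(b_w,t_w)$ are legitimate non-neighbours of $F_w$ that your band condition deletes outright, even though they can be adjacent to other type-(d) frames. Finally, a single band does not close under the recursion: a staircase of pairwise non-adjacent selected frames, each interleaving the $x$-interval of the next, leaves linearly many selected frames whose right sides still lie ahead of the current window, each imposing its own exclusion zone, and these zones have pairwise disjoint $y$-bands, so no bounded number of bands suffices as state. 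It is telling that your whole argument uses only the first (two-frame) strictness condition; the three-frame condition of \Cref{fig:strict}(b)--(d) is precisely the ingredient that makes the problem tractable, and your sketch never touches it.

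The closing claim that strict frame graphs are interval filament graphs is asserted, not proved, and cannot be retrofitted as written: mapping each frame to its $x$-interval is not a filament model, since in a strict family two frames with properly overlapping $x$-intervals may be non-adjacent ($y$-disjoint), whereas filaments on properly overlapping intervals always intersect; you construct no other model. (Had this inclusion been known, Theorem~2 would follow from \cite{Gav00} alone and the question of \cite{TTV17} would have been answered long before this paper, which advertises Burling graphs as the \emph{first} such class; so it cannot be taken on faith.) The paper's actual route avoids the geometric sweep altogether: it feeds the Burling set $(V,\prec,\adj)$ produced by the recognition algorithm into a purely relational dynamic program, observing that ${\prec}\cup{\adj}$ is a \emph{chordal} relation (\Cref{lem:acyclic} together with \crefrange{axiom:1}{axiom:3}) and recursing only into the $\prec$-downsets $V_u=\set{x\in V\colon x\prec u}$---geometrically, frame interiors---so there are just $\size{V}+1$ subproblems. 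All of the horizontal tangle that defeats your sweep (your groups (b), (c), (d)) is resolved in one stroke at each level by Frank's linear-time algorithm for chordal graphs (\Cref{thm:chordal}) run with the boosted weights $\weight^*(u)=\weight(u)+\weight(I_u)$; correctness is \Cref{lem:independent}. If you want to salvage a geometric argument, the lesson is that the recursion must descend through nesting only, never through a left-to-right sweep.
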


The rest of this paper is organized as follows.
In \Cref{sec:prelims}, we formally define strict frame representations and Burling sets---relational structures, first defined in~\cite{PT23}, which provide a convenient abstraction over strict frame representations, and we also state and prove some basic properties of Burling sets.
In \Cref{sec:recognition}, we present a polynomial-time algorithm that decides whether a given graph is a Burling graph and if it is, constructs a representation by a Burling set.
In \Cref{sec:construction}, we describe a polynomial-time algorithm that turns a Burling set into a strict frame representation.
In \Cref{sec:mis}, we present a polynomial-time algorithm to find a maximum independent set given a Burling graph (with weights) and its representation by a Burling set.
We conclude in \Cref{sec:conclusion} with various comments on related research and open problems.

\section{Frames and Burling sets}
\label{sec:prelims}

The definition of the Burling sequence $B_1,B_2,\ldots$ in modern terms can be found in \cite{CELO16,PT23}.
We will not need it in this paper.
Instead, we will use equivalent characterizations of Burling graphs due to Pournajafi and Trotignon~\cite{PT23}.
We start with necessary definitions.

A \emph{frame} is the boundary of an axis-parallel rectangle in the plane, that is, a set of the form
\[F(\ell,r,b,t)=(\set{\ell,r}\times[b,t])\cup([\ell,r]\times\set{b,t}){,}\]
where $\ell,r,b,t\in\setR$, $\ell<r$, and $b<t$.
A \emph{frame representation} of a graph $G$ is a family of frames $\set{F_v}_{v\in V(G)}$, where $F_v=F(\ell_v,r_v,b_v,t_v)$ for all vertices $v$ of $G$, such that the following conditions are satisfied for every pair of distinct vertices $u$ and $v$ of $G$:
\begin{itemize}
\item none of the corners $(\ell_u,b_u)$, $(\ell_u,t_u)$, $(r_u,b_u)$, $(r_u,t_u)$ of $F_u$ lies on $F_v$;
\item $uv$ is an edge of $G$ if and only if the frames $F_u$ and $F_v$ intersect.
\end{itemize}

Rephrasing \cite[Definition~6.2]{PT23}, we call an indexed family of frames $\set{F_i}_{i\in I}$, where $F_i=F(\ell_i,r_i,b_i,t_i)$ for all $i\in I$, \emph{strict} if it satisfies the following conditions:
\begin{itemize}
\item for every pair of distinct indices $i,j\in I$, if the frames $F_i$ and $F_j$ intersect, then
\[\ell_i<\ell_j<r_i<r_j\enspace\text{and}\enspace b_i<b_j<t_j<t_i\quad\text{or}\quad\ell_j<\ell_i<r_j<r_i\enspace\text{and}\enspace b_j<b_i<t_i<t_j{;}\]
\item there is no triple of indices $i,j,k\in I$ such that
\[\ell_i<\ell_j<\ell_k<r_i<r_j\enspace\text{and}\enspace b_i<b_j<b_k<t_k<t_j<t_i{.}\]
\end{itemize}
The former means that every intersecting pair of frames in the family looks like in \Cref{fig:strict}~(a), and the latter means that the family avoids the configurations illustrated in \Cref{fig:strict}~(b)--(d).

A graph is a \emph{strict frame graph} if it admits a strict frame representation.%
\footnote{We note that allowing the configuration in \Cref{fig:strict}~(b) in the definition of strict frame representation would lead to the same class of graphs, because the vertices represented inside two intersecting frames must be disconnected from the rest of the graph.}
Observe that the class of strict frame graphs is hereditary (that is, an induced subgraph of a strict frame graph is a strict frame graph) and triangle-free (that is, no strict frame graph contains three pairwise adjacent vertices).

A \emph{relation} on a set $S$ is a set ${\rel}\subseteq S\times S$.
We write $x\rel y$ to denote that $(x,y)\in{\rel}$, and we write ${\rel}|_U$ for the relation ${\rel}\cap(U\times U)$ on a subset $U$ of $S$.
A \emph{cycle} in a relation $\rel$ on $S$ is a non-empty set $C\subseteq S$ whose elements can be ordered as $x_1,\ldots,x_{\size{C}}$ so that $x_1\rel\cdots\rel x_{\size{C}}\rel x_1$ (in particular, $x_1\rel x_1$ when $\size{C}=1$).
A relation $\rel$ on $S$ is
\begin{itemize}
\item\emph{irreflexive} if there is no $x\in S$ with $x\rel x$;
\item\emph{acyclic} if there is no cycle in $\rel$ (which implies irreflexivity);
\item\emph{transitive} if $x\rel y$ and $y\rel z$ imply $x\rel z$ for all $x,y,z\in S$.
\end{itemize}
A \emph{strict partial order} is a relation that is irreflexive and transitive (which implies acyclicity).

Following \cite[Definition~5.1]{PT23}, we call a triple $(S,\prec,\adj)$ a \emph{Burling set} if $S$ is a non-empty finite set, $\prec$ is a strict partial order on $S$, $\adj$ is an acyclic relation on $S$, and the following conditions, called \emph{axioms}, hold for all $x,y,z\in S$:
\begin{axioms}
\item\label{axiom:1} if $x\prec y$, and $x\prec z$, and $y\neq z$, then $y\prec z$ or $z\prec y$;
\item\label{axiom:2} if $x\adj y$, and $x\adj z$, and $y\neq z$, then $y\prec z$ or $z\prec y$;
\item\label{axiom:3} if $x\adj y$ and $x\prec z$, then $y\prec z$;
\item\label{axiom:4} if $x\adj y$ and $y\prec z$, then $x\adj z$ or $x\prec z$.
\end{axioms}
\Cref{axiom:3} and irreflexivity of $\prec$ imply that the relations $\prec$ and $\adj$ are disjoint.
Sometimes it is convenient to consider transitivity of $\prec$ as an additional axiom:
\begin{axioms}[resume]
\item\label{axiom:5} if $x\prec y$ and $y\prec z$, then $x\prec z$.
\end{axioms}

Every acyclic relation $\rel$ on a non-empty finite set $S$ gives rise to an undirected graph $(S,\edgeset{\rel})$ with vertex set $S$ and edge set $\edgeset{\rel}=\{xy\colon x,y\in S$ and $x\rel y\}$.

\begin{theorem}[{\cite[Theorems 5.7 and~6.6]{PT23}}]
\label{thm:characterization}
For every graph\/ $G$, the following are equivalent:
\begin{enumerate}
\item $G$ is a Burling graph;
\item $G$ is a strict frame graph;
\item $G=(S,\edgeset{\adj})$ for some Burling set\/ $(S,\prec,\adj)$.
\end{enumerate}
\end{theorem}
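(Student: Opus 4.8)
My plan is to prove the three-way equivalence by going around the cycle $(1)\Rightarrow(2)\Rightarrow(3)\Rightarrow(1)$, so that each class of objects is shown to be realizable by the next. The implication $(1)\Rightarrow(2)$ is essentially already available: the intersection representations of $B_k$ devised by Pawlik et al.\ \cite{PKK+13} are by rectangular frames, and one checks directly that they satisfy the two strictness conditions; since the class of strict frame graphs is hereditary, every induced subgraph of $B_k$---that is, every Burling graph---is a strict frame graph. The remaining two implications carry the content: $(2)\Rightarrow(3)$ says the four axioms are necessary features of any strict frame representation, while $(3)\Rightarrow(1)$ says they are also sufficient to place $G$ inside the Burling sequence.

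For $(2)\Rightarrow(3)$ I would start from a strict frame representation $\set{F_v}_{v\in V(G)}$ with $F_v=F(\ell_v,r_v,b_v,t_v)$ and read off both relations on $S=V(G)$ from the geometry. The first strictness condition forces every intersecting pair of frames into configuration~(a) of \Cref{fig:strict}, so exactly one frame of each intersecting pair has its left side enclosed by the other; orienting each edge accordingly defines $\adj$ with $\edgeset{\adj}=E(G)$. I would take $\prec$ to be the nesting relation, with $u\prec v$ when $F_u$ lies strictly inside $F_v$. That $\prec$ is a strict partial order is immediate, and $\adj$ is acyclic because its orientation strictly increases (or decreases) the left coordinate $\ell$. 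The crux here is verifying the axioms: \Cref{axiom:1} holds because the frames enclosing the left side of a fixed frame are pairwise non-intersecting (otherwise that left side would lie inside two intersecting frames, contradicting strictness) and hence form a containment chain, while for the interaction \Cref{axiom:3,axiom:4} the forbidden configurations~(b)--(d) of \Cref{fig:strict} are exactly what one needs. This amounts to a careful but routine geometric case analysis.

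The implication $(3)\Rightarrow(1)$ is the heart of the matter, and I expect it to be the main obstacle. Here one is handed an abstract triple $(S,\prec,\adj)$ satisfying the axioms and must exhibit $(S,\edgeset{\adj})$ as an induced subgraph of some $B_k$. My plan is strong induction on $\size{S}$. \Cref{axiom:1} says that the set of elements above any given element is a $\prec$-chain, so $\prec$ has the shape of a forest directed towards its maximal elements; this forest structure, together with \Cref{axiom:2} telling us that the out-neighbourhood of each element under $\adj$ is also a $\prec$-chain, is what should be matched against the layered, recursive way in which $B_k$ is assembled from copies of $B_{k-1}$. I would locate an extreme element of $S$ (for instance a $\prec$-minimal one within a top chain) whose deletion leaves a structure still satisfying all axioms, embed the remainder by induction, and then argue that the recursive step of Burling's construction has exactly enough freedom to reintroduce the deleted vertex with precisely its prescribed $\adj$-neighbourhood.

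The difficulty is entirely one of sufficiency: the necessity of the axioms in $(2)\Rightarrow(3)$ is bookkeeping, whereas $(3)\Rightarrow(1)$ requires showing that the four local conditions on $\prec$ and $\adj$ impose no obstruction beyond what the Burling recursion can realize. Concretely, the delicate point is to prove that the forest-plus-chain structure forced by \Cref{axiom:1,axiom:2,axiom:3,axiom:4} can always be stratified so as to coincide with the parent--child and copy relations of some $B_k$; establishing this correspondence, rather than any single axiom check, is where the real work lies. A viable alternative replaces $(3)\Rightarrow(1)$ by first building a strict frame representation from the Burling set---processing the elements along the $\prec$-forest and placing each frame to realize its $\adj$-edges as overlaps, with the axioms guaranteeing strictness---which proves $(3)\Rightarrow(2)$ directly, but then one still faces the same sufficiency argument to return from strict frames to the sequence $B_k$.
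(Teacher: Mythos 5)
Your cycle $(1)\Rightarrow(2)\Rightarrow(3)\Rightarrow(1)$ is a reasonable skeleton, and the first two legs are fine: $(1)\Rightarrow(2)$ is indeed just the representation from \cite{PKK+13} plus hereditariness, and $(2)\Rightarrow(3)$ is the routine geometric bookkeeping you describe (take $\prec$ to be nesting and orient each overlap by which left side is enclosed; the forbidden configurations of \Cref{fig:strict} give \cref{axiom:1,axiom:2}, and the rest follows as you sketch). Note that the paper under review does not reprove this theorem at all---it imports it from \cite{PT23}---but the one piece it does prove is exactly your ``viable alternative'': \Cref{sec:construction} builds a strict frame representation from a Burling set by a topological sort of the symbols $\ell_x,r_x$ (\Cref{lem:horizontal}) and a depth-first search on the parent forest for $b_x,t_x$, verified in \Cref{lem:frames}. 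So your alternative route for $(3)\Rightarrow(2)$ is essentially the paper's construction.

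The genuine gap is $(3)\Rightarrow(1)$ (equivalently, returning from strict frames to the sequence $B_k$), and your proposed induction would fail as stated. The inductive hypothesis ``$G-x$ is an induced subgraph of some $B_k$'' records nothing about \emph{how} it embeds, and the Burling recursion does not offer per-vertex freedom: a new element $p$ with $p\adj q$ and no other relations can be added only if $q$ is \emph{exposed} in the representation (no $z$ with $q\prec z$), and more generally the reinserted vertex's $\adj$-out-neighbourhood is a $\prec$-chain whose realizability depends on the chosen embedding, which an unconstrained induction may have ``buried.'' So the induction must carry a strengthened invariant tracking which elements remain available as roots, probes, and exposed elements, together with composition lemmas showing such partial structures glue---this is precisely the machinery the present paper develops for its recognition algorithm (\Cref{lem:outer-join,lem:inner-join} and the two families of subproblems in \Cref{sec:recognition}), and in \cite{PT23} the corresponding work for Theorems 5.7 and 6.6 is done through an intermediate characterization (Burling trees and their derived graphs), not a delete-one-vertex argument. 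As written, your step ``the recursive step of Burling's construction has exactly enough freedom to reintroduce the deleted vertex'' asserts exactly what has to be proved, so the proposal establishes $(1)\Leftrightarrow(2)\Leftarrow(3)$ but leaves the equivalence with membership in the Burling sequence open.
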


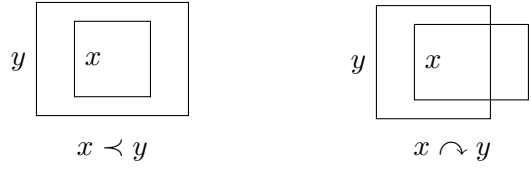
\begin{figure}[t]
\begin{center}
\begin{tikzpicture}[scale=0.5]
  \draw (0,0) rectangle (4,3);
  \draw (1,0.5) rectangle (3,2.5);
  \node[right] at (1,1.5) {$x$};
  \node[left] at (0,1.5) {$\smash[b]{y}$};
  \node[below,yshift=-5pt] at (2,0) {$x\prec y$};
\end{tikzpicture}\hskip 2cm
\begin{tikzpicture}[scale=0.5]
  \draw (0,0) rectangle (3,3);
  \draw (1,0.5) rectangle (4,2.5);
  \node[right] at (1,1.5) {$x$};
  \node[left] at (0,1.5) {$\smash[b]{y}$};
  \node[below,yshift=-5pt] at (2,0) {$x\adj y$};
\end{tikzpicture}
\end{center}
\caption{Correspondence between the relations $\prec$ and $\adj$ in a Burling set and configurations of pairs of frames in its strict frame representation.}
\label{fig:correspondence}
\end{figure}

A strict family of frames $\set{F_x}_{x\in S}$, where $F_x=F(\ell_x,r_x,b_x,t_x)$ for $x\in S$, is a \emph{strict frame representation} of a Burling set $(S,\prec,\adj)$ if the following conditions hold for every pair of distinct elements $x,y\in S$ (see \Cref{fig:correspondence}):
\begin{alignat*}{2}
x&\prec y&&\quad\text{if and only if}\quad\ell_y<\ell_x<r_x<r_y\enspace\text{and}\enspace b_y<b_x<t_x<t_y{;}\\
x&\adj y&&\quad\text{if and only if}\quad\ell_y<\ell_x<r_y<r_x\enspace\text{and}\enspace b_y<b_x<t_x<t_y{.}
\end{alignat*}
Clearly, every strict frame representation of a Burling set $(S,\prec,\adj)$ is a strict frame representation of the graph $(S,\edgeset{\adj})$, for which only the second condition above is required.

It is straightforward to verify that the conditions above turn every strict family of frames $\set{F_x}_{x\in S}$ into a Burling set $(S,\prec,\adj)$; see \cite[Lemma~6.4]{PT23}.
Conversely, it is not difficult to construct a strict frame representation for any given Burling set---we provide an algorithm for this task in \Cref{sec:construction}.
Burling sets with their \crefrange{axiom:1}{axiom:4} thus provide an abstraction of key structural properties of strict families of frames.
In particular, the core difficulty in constructing a strict frame representation for a given graph lies in constructing the corresponding Burling set---we provide an algorithm for this task in the next section.

We complete this section with auxiliary definitions and lemmas on Burling sets.

\begin{lemma}[{\cite[Lemma~5.3]{PT23}}]
\label{lem:acyclic}
In a Burling set\/ $(S,\prec,\adj)$, the relation\/ ${\prec}\cup{\adj}$ is acyclic.
\end{lemma}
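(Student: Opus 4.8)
The plan is to assume $R\coloneqq{\prec}\cup{\adj}$ has a cycle, take one of minimum size, and derive a contradiction by a vertex-deletion argument powered by \Cref{axiom:4}. So I would suppose $C$ is a cycle in $R$ of minimum size, with its elements ordered as $x_1,\ldots,x_n$ so that $x_1\rel x_2\rel\cdots\rel x_n\rel x_1$, each step being an instance of $\prec$ or of $\adj$; here and below indices are taken modulo $n$. The argument then splits into two cases according to whether $C$ uses both relations or only one.

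If every step of $C$ is of the same type, then $C$ is a cycle entirely in $\prec$ or entirely in $\adj$. The former is impossible because $\prec$, being a strict partial order, is acyclic, and the latter is impossible because $\adj$ is acyclic by definition of a Burling set. This also disposes of the degenerate case $n=1$, where $x_1\rel x_1$ means $x_1\prec x_1$ or $x_1\adj x_1$.

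It remains to handle the case where $C$ uses both $\prec$ and $\adj$, which in particular forces $n\geq2$. The key observation is that, since both types of step occur, going around the cycle we must meet an $\adj$-step immediately followed by a $\prec$-step: otherwise every $\adj$-step would be followed by another $\adj$-step, which, read cyclically, would force all steps to be $\adj$-steps, contrary to a $\prec$-step occurring. So there is an index $i$ with $x_i\adj x_{i+1}$ and $x_{i+1}\prec x_{i+2}$. Applying \Cref{axiom:4} with $(x,y,z)=(x_i,x_{i+1},x_{i+2})$ yields $x_i\adj x_{i+2}$ or $x_i\prec x_{i+2}$; in either case $x_i\rel x_{i+2}$. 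Deleting $x_{i+1}$ and keeping all other steps then exhibits a cycle in $R$ on the strictly smaller set $C\setminus\set{x_{i+1}}$ (the remaining elements are automatically distinct, since $C$ is a set), contradicting the minimality of $C$.

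I expect the only real content to lie in the reduction of the last paragraph: recognizing that \Cref{axiom:4} is exactly a \emph{contraction rule} replacing a two-step $\adj$-then-$\prec$ walk by a single $R$-step, and that any cycle mixing the two relations must contain such a pattern. The remaining verifications—that deleting one vertex preserves distinctness and that the monochromatic cases clash with acyclicity of $\prec$ and of $\adj$—are routine.
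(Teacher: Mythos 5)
Your proposal is correct and follows essentially the same route as the paper: take a minimum cycle in ${\prec}\cup{\adj}$, note that acyclicity of $\prec$ and of $\adj$ forces both relation types to occur and hence an $\adj$-step immediately followed by a $\prec$-step, contract that pattern via \cref{axiom:4}, and contradict minimality. Your write-up merely spells out details the paper leaves implicit (the monochromatic case, distinctness after deletion), including the correct handling of the $n=2$ case, where deletion yields a one-element cycle, which the paper's definition of cycle explicitly permits.
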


\begin{proof}
Let ${\rel}={\prec}\cup{\adj}$.
Suppose $\rel$ is not acyclic.
Let $C$ be a smallest cycle in $\rel$.
Since $\prec$ and $\adj$ are acyclic, the cycle $C$ contains a pair of elements related in $\prec$ and a pair of elements related in $\adj$.
Consequently, there are $x,y,z\in C$ with $x\adj y\prec z$, which implies $x\rel z$ by \cref{axiom:4}.
We conclude that $C-\set{y}$ is a smaller cycle in $\rel$, contradicting minimality of $C$.
\end{proof}

\begin{lemma}
\label{lem:path}
Let\/ $(S,\prec,\adj)$ be a Burling set and\/ $y\in S$.
Then, for every path\/ $x_0\cdots x_k$ in the graph\/ $(S,\edgeset{\adj})$ such that\/ $x_0\prec y$ and\/ $x_k\nprec y$, there is an index\/ $i\in\set{1,\ldots,k}$ such that\/ $x_i\adj y$.
\end{lemma}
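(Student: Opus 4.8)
The plan is to locate the place along the path where the relation to $y$ switches from $\prec$ to $\nprec$, and then read off the desired adjacency directly from the axioms. Concretely, I would let $i$ be the smallest index in $\set{1,\ldots,k}$ with $x_i\nprec y$. This is well defined: $x_0\prec y$ forces $i\geq 1$, while $x_k\nprec y$ guarantees that such an index exists and is at most $k$. By minimality of $i$, the previous vertex satisfies $x_{i-1}\prec y$.

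Now the single edge $x_{i-1}x_i$ of $(S,\edgeset{\adj})$ comes, by the definition of the edge set, from one of the two oriented relations $x_{i-1}\adj x_i$ or $x_i\adj x_{i-1}$, and I would split into these two cases. In the first case, \cref{axiom:3} applied to $x_{i-1}\adj x_i$ together with $x_{i-1}\prec y$ yields $x_i\prec y$, contradicting the choice of $i$; hence this case cannot occur. In the second case, \cref{axiom:4} applied to $x_i\adj x_{i-1}$ together with $x_{i-1}\prec y$ yields $x_i\adj y$ or $x_i\prec y$, and since $x_i\nprec y$ we are left with $x_i\adj y$, which is exactly the conclusion sought.

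The only subtlety, and the one place where care is needed, is that edges of $(S,\edgeset{\adj})$ are unordered, so the orientation of $\adj$ across the critical edge $x_{i-1}x_i$ is a priori unknown. The argument works precisely because the two possible orientations are governed by two different axioms: \cref{axiom:3} eliminates the orientation $x_{i-1}\adj x_i$, while \cref{axiom:4} produces the result in the orientation $x_i\adj x_{i-1}$. No transitivity of $\prec$ or any further structure is required, so I expect the proof to be short once the transition index $i$ is isolated.
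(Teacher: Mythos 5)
Your proof is correct and coincides with the paper's own argument: both isolate the minimal index $i$ with $x_i\nprec y$, rule out the orientation $x_{i-1}\adj x_i$ via \cref{axiom:3}, and then apply \cref{axiom:4} to $x_i\adj x_{i-1}$ and $x_{i-1}\prec y$, discarding the alternative $x_i\prec y$ by the choice of $i$. No differences worth noting.
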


\begin{proof}
Let $i\in\set{1,\ldots,k}$ be minimum such that $x_i\nprec y$.
Thus $x_{i-1}\prec y$.
If $x_{i-1}\adj x_i$, then $x_i\prec y$ by \cref{axiom:3}, which is a contradiction.
Thus $x_i\adj x_{i-1}$.
Now, \cref{axiom:4} yields $x_i\adj y$ or $x_i\prec y$, but the latter is again a contradiction.
Thus $x_i\adj y$.
\end{proof}

We define three distinguished types of elements in a Burling set $(S,\prec,\adj)$; see \Cref{fig:elements}.
An element $s\in S$ is a \emph{root} if there is no $x\in S$ with $s\prec x$ or $s\adj x$.
Since the relation ${\prec}\cup{\adj}$ is acyclic, every Burling set has at least one root.
Intuitively, thinking of a strict frame representation of a Burling set, a frame $F(\ell_s,r_s,b_s,t_s)$ with minimum value of $\ell_s$ represents a root.
A \emph{probe} is an element $p\in S$ such that there is no $x\in S$ with $p\prec x$, or $x\prec p$, or $x\adj p$.
A Burling set may have no probes.
Intuitively, in a strict frame representation, the frame $F(\ell_p,r_p,b_p,t_p)$ representing a probe can be extended arbitrarily far to the right, i.e., $r_p$ can be increased arbitrarily without changing the intersection graph.
An element $q\in S$ is \emph{exposed} if there is no $x\in S$ with $q\prec x$.
In particular, roots and probes are exposed.
Intuitively, in a strict frame representation, a frame represents an exposed element if it touches the outer (infinite) region determined by the geometric drawing of the frames.
If $q$ is exposed, then the Burling set $(S,\prec,\adj)$ can be extended by adding a new element $p$ with $p\adj q$ and with no other relations involving $p$.

\begin{figure}[t]
\begin{center}
\begin{tikzpicture}[xscale=0.75,yscale=0.4]
  \draw (0,0) rectangle (3,7);
  \draw (2.5,0.5) rectangle (5,1.5);
  \draw (2.5,2) rectangle (5,4);
  \draw (3.75,2.5) rectangle (4.5,3.5);
  \draw (2.5,4.5) rectangle (5,6.5);
  \draw (4.5,5) rectangle (6,6);
  \node[left] at (0,3.5) {$a$};
  \node[left] at (2.5,1) {$b$};
  \node[left] at (2.5,3) {$c$};
  \node[left] at (2.5,5.5) {$d$};
  \node[left] at (3.75,3) {$e$};
  \node[left] at (4.5,5.5) {$f$};
\end{tikzpicture}
\end{center}
\caption{Distinguished elements of a Burling set in a frame representation: $a$ is a root, $b$ and $f$ are probes, and all three are exposed; $c$ and $d$ are exposed but neither roots nor probes; $e$ is not exposed.}
\label{fig:elements}
\end{figure}
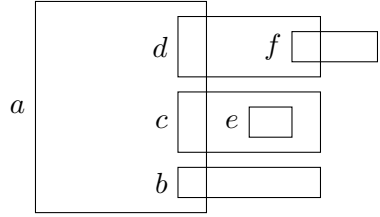

\begin{lemma}
\label{lem:root}
A Burling set\/ $(S,\prec,\adj)$ for which the graph\/ $(S,\edgeset{\adj})$ is connected has a unique root.
\end{lemma}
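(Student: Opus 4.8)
The existence of a root is already in hand: since ${\prec}\cup{\adj}$ is acyclic by \cref{lem:acyclic}, the finite set $S$ has an element with no outgoing $\prec$- or $\adj$-edge, which is exactly a root. So the plan is to prove \emph{uniqueness} by contradiction. Suppose $(S,\edgeset{\adj})$ has two distinct roots $s$ and $s'$. Using connectedness, I would take a \emph{shortest} path $x_0\cdots x_k$ in $(S,\edgeset{\adj})$ with $x_0=s$ and $x_k=s'$; being shortest, it is chordless, meaning $x_ax_b$ is a non-edge whenever $\size{a-b}\geq 2$. The whole point of the argument is to manufacture such a chord.

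First I would orient each edge of the path by the direction of $\adj$. Because a root has no outgoing $\adj$-edge, the two end edges are forced: $x_1\adj x_0$ and $x_{k-1}\adj x_k$. Reading the orientations from $x_0$ to $x_k$, the first edge thus points back toward the start $x_0$ while the last points forward toward the end $x_k$, so somewhere the orientation must switch from backward to forward; this yields an interior vertex $x_{i+1}$ that is a \emph{source}, i.e.\ $x_{i+1}\adj x_i$ and $x_{i+1}\adj x_{i+2}$. (For $k=1$ the two forced orientations already contradict each other, and for $k=2$ the unique interior vertex is such a source.)

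At the source $x_{i+1}$, applying \cref{axiom:2} to $x_{i+1}\adj x_i$ and $x_{i+1}\adj x_{i+2}$ gives $x_i\prec x_{i+2}$ or $x_{i+2}\prec x_i$. In the first case I would feed the relation $x_i\prec x_{i+2}$ and the sub-path $x_i,x_{i-1},\ldots,x_0$ into \cref{lem:path} with $y=x_{i+2}$: the endpoint $x_0=s$ satisfies $x_0\nprec x_{i+2}$ because a root has no outgoing $\prec$-edge, so \cref{lem:path} returns some $x_m\adj x_{i+2}$ with $m\leq i-1$, an $\adj$-edge between path vertices at distance at least $3$, i.e.\ a chord. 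The second case is symmetric, using the forward sub-path $x_{i+2},\ldots,x_k$ with $y=x_i$ and the root $x_k=s'$. Either chord contradicts minimality of the path, so no two distinct roots can exist and the root is unique.

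I expect the main obstacle to be this source step and its clean coupling with \cref{lem:path}. One must verify that the sub-path handed to \cref{lem:path} is non-degenerate — which is precisely where the property ``a root has no outgoing $\prec$-edge'' does the work, since it both supplies the required endpoint $\nprec y$ and rules out the degenerate boundary situations (there the conclusion of \cref{axiom:2} would be an impossible $\prec$-relation directly between the two roots $s$ and $s'$) — and that the vertex produced by \cref{lem:path} is genuinely non-consecutive, so that it is a chord rather than one of the path's own edges.
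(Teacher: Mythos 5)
Your proof is correct and follows essentially the same route as the paper's: take a shortest path between two distinct roots, use the forced orientations $x_1\adj x_0$ and $x_{k-1}\adj x_k$ to find an interior source, apply \cref{axiom:2} to its two neighbors, and then apply \cref{lem:path} toward the appropriate root to produce an $\adj$-chord that shortcuts the path, contradicting minimality. Your extra care with the degenerate cases ($k=1$, $k=2$, and the source sitting next to an endpoint, where \cref{axiom:2} directly contradicts rootness) is handled only implicitly in the paper, but it is consistent with and subsumed by the paper's argument.
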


\begin{proof}
Suppose that, on the contrary, there are multiple roots.
Let $x_0\cdots x_k$ be a shortest path in $(S,\edgeset{\adj})$ between two distinct roots $x_0$ and $x_k$.
Thus $x_1\adj x_0$ and $x_{k-1}\adj x_k$.
Consequently, there is $j\in\set{1,\ldots,k-1}$ such that $x_j\adj x_{j-1}$ and $x_j\adj x_{j+1}$.
This implies that $x_{j-1}\prec x_{j+1}$ or $x_{j+1}\prec x_{j-1}$ by \cref{axiom:2}.
Suppose $x_{j-1}\prec x_{j+1}$.
Since $x_0\nprec x_{j+1}$ (as $x_0$ is a root), \Cref{lem:path} implies that there is $i\in\set{0,\ldots,j-2}$ with $x_i\adj x_{j+1}$.
This shows that $x_0\cdots x_ix_{j+1}\cdots x_k$ is a path in $(S,\edgeset{\adj})$ between $x_0$ and $x_k$ shorter than $x_0\cdots x_k$, which contradicts the choice of the latter as a shortest path.
An analogous contradiction is reached when $x_{j+1}\prec x_{j-1}$.
\end{proof}

\begin{lemma}
\label{lem:outer-join}
Let\/ $(S_1,\prec_1,\adj_1)$ and\/ $(S_2,\prec_2,\adj_2)$ be Burling sets with\/ $S_1\cap S_2=\set{q}$, where\/ $q$ is a root of\/ $(S_1,\prec_1,\adj_1)$ and is exposed in\/ $(S_2,\prec_2,\adj_2)$.
Then\/ $(S_1\cup S_2,\:{\prec_1}\cup{\prec_2},\:{\adj_1}\cup{\adj_2})$ is a Burling set in which every root of\/ $(S_2,\prec_2,\adj_2)$ remains a root, every probe of\/ $(S_1,\prec_1,\adj_1)$ or\/ $(S_2,\prec_2,\adj_2)$ other than\/ $q$ remains a probe, and\/ $q$ remains exposed.
\end{lemma}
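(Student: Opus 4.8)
The plan is to write $S=S_1\cup S_2$, ${\prec}={\prec_1}\cup{\prec_2}$, and ${\adj}={\adj_1}\cup{\adj_2}$, and to exploit the fact that the two structures overlap in the single element $q$. The first structural observation I would record is that, because $S_1\cap S_2=\set{q}$, any pair related by $\prec$ or by $\adj$ has both its elements in $S_1$ or both in $S_2$; in particular, every element of $S\setminus\set{q}$ belongs to exactly one of the two sets, so all relations incident to it stay on that one side, and $q$ is the only element through which the two Burling sets can interact. The second observation, which does most of the work, is that $q$ can never stand on the left of $\prec$: since $q$ is a root of $(S_1,\prec_1,\adj_1)$ there is no $x$ with $q\prec_1 x$, and since $q$ is exposed in $(S_2,\prec_2,\adj_2)$ there is no $x$ with $q\prec_2 x$. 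For the same reason ($q$ being a root of the first set) $q$ has no outgoing $\adj_1$-edge either.

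Granting these observations, I would verify that $\prec$ is a strict partial order and $\adj$ is acyclic. Irreflexivity of $\prec$ is inherited from $\prec_1$ and $\prec_2$. For transitivity, consider $x\prec y\prec z$: if the two relations lie on different sides, then the common element $y$ must equal $q$ and one of them reads $q\prec z$, which is impossible, so both relations lie on one side and transitivity there applies. For acyclicity of $\adj$, I would argue that a cycle cannot use edges from both sides: since $q$ is the unique common element, a cycle meeting both $S_1\setminus\set{q}$ and $S_2\setminus\set{q}$ would be forced to pass through $q$ at least twice, which is impossible for a cycle; hence every cycle lies entirely within $S_1$ or within $S_2$, contradicting acyclicity of $\adj_1$ or $\adj_2$.

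Next I would check the four axioms, each of which I expect to reduce to a single side. In \cref{axiom:1} the element $x$ sits on the left of $\prec$, hence $x\neq q$, so $x$ lies on one side and both hypotheses $x\prec y$ and $x\prec z$ stay on that side; the conclusion follows from the corresponding axiom there. The same reasoning applies to \cref{axiom:3}, where $x$ is again on the left of $\prec$, and to \cref{axiom:4}, where the bridging element $y$ appears on the left of $\prec$ in $y\prec z$ and hence $y\neq q$, confining both $x\adj y$ and $y\prec z$ to one side. \Cref{axiom:2} is the only case where the distinguished element may be $q$: here $x$ is on the left of $\adj$, and if $x=q$ then, as $q$ has no $\adj_1$-edge, both $x\adj y$ and $x\adj z$ are $\adj_2$-relations and the conclusion comes from \cref{axiom:2} for $(S_2,\prec_2,\adj_2)$; if $x\neq q$ the argument again stays on one side.

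Finally I would verify the three additional statements by direct inspection, again using that $q$ is maximal in $\prec$. A root $r$ of $(S_2,\prec_2,\adj_2)$ acquires no new outgoing relation, since $r\prec x$ or $r\adj x$ in the union would, after excluding the forbidden $\prec_1$- and $\adj_1$-edges out of $q$, force an outgoing relation of $r$ already within $S_2$. A probe $p\neq q$ of either side lies in exactly one of $S_1\setminus\set{q}$, $S_2\setminus\set{q}$, so it neither gains nor loses any incident relation and remains a probe; the element $q$ is excluded precisely because gluing can create new relations at $q$. That $q$ remains exposed is immediate from the maximality observation. I expect the main obstacle to be organizing the acyclicity argument for $\adj$ cleanly and handling \cref{axiom:2} with care, as it is the only place where $q$ may legitimately appear on the left of $\adj$; everything else collapses to a single-side check once the maximality of $q$ in $\prec$ is in hand.
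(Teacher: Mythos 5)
Your proof is correct and takes essentially the same route as the paper's: form the union, derive acyclicity of $\adj$ from $\size{S_1\cap S_2}=1$, and dispose of every cross-side instance of the axioms by the key observation that the pivot element would have to be $q$, which never stands on the left of $\prec$ (being a root of the first set and exposed in the second) and has no outgoing $\adj_1$-edge. The only difference is presentational: the paper shows the premises fail vacuously for mixed triples, whereas you reduce each axiom to a single side (treating the case $x=q$ in \cref{axiom:2} separately via $\adj_2$), which is the same argument.
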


\begin{proof}
Let $S=S_1\cup S_2$, ${\prec}={\prec_1}\cup{\prec_2}$, and ${\adj}={\adj_1}\cup{\adj_2}$.
Acyclicity of $\adj$ follows from acyclicity of $\adj_1$ and $\adj_2$ by the assumption that $\size{S_1\cap S_2}=1$.
Irreflexivity of $\prec$ is clear.
To prove that $(S,\prec,\adj)$ is a Burling set, it remains to verify \crefrange{axiom:1}{axiom:5}.
Each of them involves a triple of elements $x,y,z\in S$ and follows from the respective condition on $(S_1,\prec_1,\adj_1)$ or $(S_2,\prec_2,\adj_2)$ when $\set{x,y,z}\subseteq S_1$ or $\set{x,y,z}\subseteq S_2$.
Thus, consider a triple $x,y,z\in S$ with $\set{x,y,z}\nsubseteq S_1$ and $\set{x,y,z}\nsubseteq S_2$.
Let ${\rel}={\prec}\cup{\adj}$.
If $x\rel y$ and $x\rel z$, then $x=q$, and the relation of $q$ with the element $y$ or $z$ in $S_1$ contradicts the assumption that $q$ is a root of $(S_1,\prec_1,\adj_1)$.
Hence, \crefrange{axiom:1}{axiom:3} are satisfied vacuously for $x,y,z$.
If $x\rel y$ and $y\prec z$, then $y=q$, and the relation $q\prec z$ contradicts the assumption that $q$ is exposed in $(S_1,\prec_1,\adj_1)$ and in $(S_2,\prec_2,\adj_2)$.
Hence, \cref{axiom:4,axiom:5} are satisfied vacuously for $x,y,z$.
This shows that $(S,\prec,\adj)$ is a Burling set.
The statements on roots, probes, and exposure of $q$ being preserved follows directly from the definitions and the fact that ${\prec}={\prec_1}\cup{\prec_2}$ and ${\adj}={\adj_1}\cup{\adj_2}$.
\end{proof}

\begin{lemma}
\label{lem:inner-join}
Let\/ $(S_1,\prec_1,\adj_1)$ and\/ $(S_2,\prec_2,\adj_2)$ be Burling sets with the following properties:
\begin{itemize}
\item $S_1\cap S_2$ is a non-empty set of probes in both\/ $(S_1,\prec_1,\adj_1)$ and\/ $(S_2,\prec_2,\adj_2)$;
\item there is a set\/ $S_2'\subseteq S_2$ such that\/ $S_2'=\set{x\in S_2\colon q\adj_2x}$ for all\/ $q\in S_1\cap S_2$.
\end{itemize}
Then\/ $(S_1\cup S_2,\:{\prec_1}\cup{\prec_2}\cup((S_1-S_2)\times S_2'),\:{\adj_1}\cup{\adj_2})$ is a Burling set in which every root of\/ $(S_2,\prec_2,\adj_2)$ that is not in\/ $S_1$ remains a root and every probe of\/ $(S_2,\prec_2,\adj_2)$ remains a probe.
\end{lemma}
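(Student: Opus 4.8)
The plan is to set $S=S_1\cup S_2$, $Q=S_1\cap S_2$, $A=S_1-S_2$, $B=S_2-S_1$, ${\prec}={\prec_1}\cup{\prec_2}\cup N$ where $N=A\times S_2'$, and ${\adj}={\adj_1}\cup{\adj_2}$, and then to check successively that $\adj$ is acyclic, that $\prec$ is a strict partial order, that \crefrange{axiom:1}{axiom:5} hold, and finally the assertions about roots and probes. Everything is made routine by two preliminary observations. First, since each $q\in Q$ is a probe of both Burling sets, no element of $Q$ occurs in any $\prec_1$- or $\prec_2$-relation and no $\adj$-edge points into $Q$; consequently ${\prec_1}\subseteq A\times A$, ${\prec_2}\subseteq B\times B$, and (because a probe is never the head of an $\adj_2$-edge) $S_2'\subseteq B$, so the three possible shapes of a single $\prec$-relation are: inside $A$ via $\prec_1$, inside $B$ via $\prec_2$, and from $A$ into $S_2'$ via $N$. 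Second, $S_2'$ is upward closed under $\prec_2$: if $y\in S_2'$ and $y\prec_2 z$, then choosing any $q\in Q$ (this is where $Q\neq\emptyset$ is used) we have $q\adj_2 y$ and $y\prec_2 z$, so \cref{axiom:4} in $(S_2,\prec_2,\adj_2)$ gives $q\adj_2 z$ or $q\prec_2 z$, and the latter is ruled out by $q$ being a probe, whence $z\in S_2'$.

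Acyclicity of $\adj$ is then immediate: each $q\in Q$ is a source of $\adj$, so no cycle meets $Q$, and a cycle avoiding $Q$ would lie wholly in $A$ or wholly in $B$ (there is no $\adj$-edge between them), contradicting acyclicity of $\adj_1$ or $\adj_2$. Irreflexivity of $\prec$ holds because $N$ relates the disjoint sets $A$ and $S_2'$. To verify \crefrange{axiom:1}{axiom:5} I would classify each relation by the observation above; whenever all elements of a triple lie in a single $S_i$, the axiom follows from the corresponding axiom of that $S_i$, so only the genuinely mixed triples require work, and each is settled by the two observations. For transitivity, a chain $x\prec_1 y\prec z$ with the second step in $N$ forces $x\in A$ (as $\prec_1$ avoids $Q$) and hence $x\prec z$ in $N$, while $x\prec y\prec_2 z$ with the first step in $N$ gives $z\in S_2'$ by upward closure and hence $x\prec z$ in $N$. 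For \cref{axiom:2}, two $\adj$-edges out of a common $x$ using different $\adj_i$ force $x\in Q$, so the head in $S_2$ lies in $S_2'$ and the head in $S_1$ lies in $A$ (it is not in $Q$, being an $\adj_1$-head), and these two heads are $N$-related; and in \cref{axiom:1}, if $x\in A$ has two $N$-successors $y,z\in S_2'$, these are comparable by \cref{axiom:2} of $S_2$ applied to a common probe $q$ with $q\adj_2 y$ and $q\adj_2 z$. \cref{axiom:3} is entirely analogous.

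I expect the one genuinely delicate point to be \cref{axiom:4} in its mixed form: $x\adj_1 y$ with $y\in A$ and $y\prec z$ in $N$ (so $z\in S_2'$). Here one splits on the location of $x\in S_1$. If $x\in A$, then $x\prec z$ lies in $N$; if $x\in Q$, then the uniformity hypothesis $S_2'=\set{w\in S_2\colon x\adj_2 w}$ yields exactly $x\adj_2 z$, that is $x\adj z$. This is the only place where it matters that every common probe dominates the same set $S_2'$, and it is the crux of the lemma. Finally, the statements on roots and probes follow from the first observation. A root of $S_2$ outside $S_1$ sits in $B$ and gains no outgoing relation, since $\prec_1,\adj_1$ do not touch it and $N$ emanates only from $A$, so it remains a root. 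A probe $p$ of $S_2$ remains a probe because the forbidden relations $p\prec x$, $x\prec p$, $x\adj p$ are absent in each $S_i$ (for $p\in Q$ using that $p$ is then a common probe), and $N$ adds none at $p$: it cannot take $p$ as its left endpoint since $p\notin A$, and $x\prec p$ in $N$ would require $p\in S_2'$, i.e.\ an $\adj_2$-edge into $p$, which is impossible.
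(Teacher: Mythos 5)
Your proof is correct and follows essentially the same route as the paper's: a direct case verification of \crefrange{axiom:1}{axiom:5} for the union, using the probe properties of $S_1\cap S_2$ to confine $\prec_1$ and $\prec_2$, the uniformity hypothesis on $S_2'$ at the crux of \cref{axiom:4}, and \cref{axiom:4} of $(S_2,\prec_2,\adj_2)$ to get upward closure of $S_2'$ (which the paper invokes contrapositively in its final case). Your classification by the shape of each related pair merely reorganizes the paper's case analysis by the location of the triple $x,y,z$.
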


\begin{proof}
Let $S=S_1\cup S_2$, $Q=S_1\cap S_2$, ${\prec}={\prec_1}\cup{\prec_2}\cup((S_1-S_2)\times S_2')$, and ${\adj}={\adj_1}\cup{\adj_2}$.
Let ${\rel}={\prec}\cup{\adj}$.
Since $Q$ is a common set of probes for both Burling sets, there are no $q\in Q$ and $x\in S$ with $q\prec x$ or $x\rel q$.
This implies acyclicity of $\adj$.
Irreflexivity of $\prec$ is clear.
To prove that $(S,\prec,\adj)$ is a Burling set, it remains to verify \crefrange{axiom:1}{axiom:5}.
Each of them involves a triple of elements $x,y,z\in S$ and follows from the respective condition on $(S_1,\prec_1,\adj_1)$ or $(S_2,\prec_2,\adj_2)$ when $\set{x,y,z}\subseteq S_1$ or $\set{x,y,z}\subseteq S_2$.
Thus, consider a triple $x,y,z\in S$ with $\set{x,y,z}\nsubseteq S_1$, $\set{x,y,z}\nsubseteq S_2$, $x\rel y$, and $x\rel z$ or $y\rel z$.
The latter conditions imply that $y,z\notin Q$.
We proceed by case distinction, in each case proving that every axiom, which has the form of an implication, holds in its conclusion or fails in its premise for the triple $x,y,z$.

If $x\in S_2-Q$, then the assumptions that $x\rel y$ and $x\rel z$ or $y\rel z$ imply $y,z\in S_2-Q$, contradicting the assumption that $\set{x,y,z}\nsubseteq S_2$.
Thus $x\in S_1$.
Now, if $x\in Q$, $y\in S_1-Q$, and $z\in S_2-Q$, then the assumption that $x\rel y$ implies $x\adj y$, and the assumption that $x\rel z$ or $y\rel z$ implies $x\adj z\in S_2'$ and consequently $y\prec z$, so the premises of \cref{axiom:1,axiom:3,axiom:4,axiom:5} fail and the conclusion of \cref{axiom:2} holds for $x,y,z$.
If $x\in Q$, $y\in S_2-Q$, and $z\in S_1-Q$, then the assumption that $x\rel y$ implies $x\adj y\in S_2'$ and consequently $z\prec y$, and the assumption that $x\rel z$ or $y\rel z$ implies $x\adj z$, with the same consequence on the axioms as above.
If $x,y\in S_1-Q$ and $z\in S_2-Q$, then the assumption that $x\rel z$ or $y\rel z$ implies $z\in S_2'$, so that $x\prec z$ and $y\prec z$, making the conclusion of every axiom hold for $x,y,z$.
Finally, suppose $x\in S_1-Q$ and $y\in S_2-Q$.
The assumption that $x\rel y$ implies $y\in S_2'$, so that $x\prec y$.
This makes the premises of \crefrange{axiom:2}{axiom:4} fail for $x,y,z$.
If additionally $z\in S_1-Q$, then $z\prec y$, making the conclusion of \cref{axiom:1} hold and the premise of \cref{axiom:5} fail for $x,y,z$.
Now, suppose $z\in S_2-Q$.
Let $q\in Q$.
It follows that $q\adj y$.
If $z\in S_2'$, then $x\prec z$, making the conclusion of \cref{axiom:5} hold, and $q\adj z$, implying $y\prec z$ or $z\prec y$, and making the conclusion of \cref{axiom:1} hold for $x,y,z$.
If $z\notin S_2'$, then $x\rel z$ is impossible, so the premise of \cref{axiom:1} fails, and so does the premise of \cref{axiom:5}, because $y\prec z$ would imply $q\adj z$ or $q\prec z$ by \cref{axiom:4}, either of which is a contradiction.

We have shown that $(S,\prec,\adj)$ is a Burling set.
The statement on roots and probes being preserved follows directly from the definitions and the construction of $\prec$ and $\adj$.
\end{proof}

\section{Recognition}
\label{sec:recognition}

For this entire section, we fix a graph $G$ for which we want to decide whether it is a Burling graph and if it is, to construct a Burling set $(V,\prec,\adj)$ such that $G=(V,\edgeset{\adj})$.
All graph-theoretic terms will refer to the fixed graph $G$.
In particular, a \emph{vertex} or an \emph{edge} will always mean a vertex or an edge of $G$.
We will be assuming that $G$ is triangle-free, because a graph containing a triangle is never a Burling graph, and whether a graph contains a triangle can be decided trivially in polynomial time.

We will use the following notation and terminology.
Let $V$ be the set of vertices (of $G$).
A~set $S\subseteq V$ is \emph{connected} if $S\neq\emptyset$ and the induced subgraph $G[S]$ is connected.
A \emph{component} of a set $S\subseteq V$ is an inclusion-maximal connected subset of $S$.
A \emph{neighbor} of a vertex $v$ is a vertex with an edge to $v$.
For a vertex $v$, let $N(v)$ denote the set of neighbors of $v$, and let $N[v]=N(v)\cup\set{v}$.
A \emph{neighbor} of a set $S\subseteq V$ is a vertex in $V-S$ with an edge to at least one vertex in $S$.
For a set $S\subseteq V$, let $N(S)$ denote the set of neighbors of $S$, and let $N[S]=S\cup N(S)$.

A set $S'\subseteq V$ is \emph{homogeneous} for a set $S\subseteq V$ if $S'\subseteq N(x)$ or $S'\cap N(x)=\emptyset$ for every $x\in S$.
In other words, $S'$ is homogeneous for $S$ if all vertices in $S'$ have the same neighbors in $S$.
A~family $\famC$ of pairwise disjoint subsets of $V$ is \emph{nested} if every pair of distinct members $C_1,C_2\in\famC$ satisfies at least one of the following conditions:
\begin{itemize}
\item $N(C_1)\subseteq N(C_2)$ and the set $N(C_1)$ is homogeneous for $C_2$;
\item $N(C_2)\subseteq N(C_1)$ and the set $N(C_2)$ is homogeneous for $C_1$;
\item $N(C_1)\cap N(C_2)=\emptyset$.
\end{itemize}
A \emph{nesting order} on a nested family $\famC$ is a strict partial order $<$ on $\famC$ with the following properties for all distinct members $C_1,C_2\in\famC$:
\begin{itemize}
\item if $C_1<C_2$, then $N(C_1)\subseteq N(C_2)$ and the set $N(C_1)$ is homogeneous for $C_2$;
\item if neither $C_1<C_2$ nor $C_2<C_1$, then $N(C_1)\cap N(C_2)=\emptyset$.
\end{itemize}
In the latter case, $N(C_1)$ is trivially homogeneous for $C_2$, and so is $N(C_2)$ for $C_1$.
It is clear that every nested family has a nesting order.
The definition directly implies a polynomial-time algorithm to test whether a family $\famC$ is nested and if it is, to construct a nesting order on $\famC$.

A \emph{Burling structure} is a Burling set $(U,\prec,\adj)$ such that $U\subseteq V$ and $(U,\edgeset{\adj})$ is an induced subgraph of $G$, that is, for all $x,y\in U$, there is an edge $xy$ in $G$ if and only if $x\adj y$ or $y\adj x$.
Such a Burling structure is \emph{around} a connected set $S$ if $U=N[S]$.

We are ready to provide an algorithm that decides whether $G$ is a Burling graph and if it is, produces a Burling set $(V,\prec,\adj)$ such that $G=(V,\edgeset{\adj})$.
The algorithm applies the dynamic programming technique, solving the following \emph{subproblems} on the way:
\begin{itemize}
\item an \emph{unrooted subproblem} $\Unrooted(X,S)$ asks for a Burling structure around $S$ with $N(S)$ a set of probes, where $X=\emptyset$ or $X=N[v]$ for some vertex $v$, and $S$ is a component of $V-X$;
\item a \emph{rooted subproblem} $\Rooted(X,r,S)$ asks for a Burling structure around $S$ with $r$ a root and $N(S)-\set{r}$ a set of probes, where $X=\emptyset$ or $X=N[v]$ for some vertex $v$, $r\in V-X$, and $S$ is a component of $V-(X\cup\set{r})$ containing at least one neighbor of $r$.
\end{itemize}
The \emph{size} of such a subproblem is the size of $S$.
A \emph{solution} to a subproblem is a Burling structure being asked for; if it does not exist, then the subproblem has no solution.
It is clear from the description above that the number of subproblems is polynomial in the number of vertices.
Careful analysis shows that if $G$ has $n$ vertices, then there are only $\Oh(n^2)$ subproblems---we omit the details.

Below, we describe algorithms to solve a subproblem $\Unrooted(X,S)$ or $\Rooted(X,r,S)$ assuming that all smaller subproblems have been already solved.
In these descriptions, \emph{failing} means that the algorithm terminates computations and reports no solution to the subproblem.

We start by explaining the intuition behind the algorithm for an unrooted problem $\Unrooted(X,S)$, describing it in terms of a strict frame representation.
Recall that $X=N[v]$ for some vertex $v$ or $X=\emptyset$.
We focus on the former setting.
Suppose that we have already fixed the frame $F_v$ for $v$ and decided that the component $S$ of $G-X$ should be represented inside $F_v$.
Recall that in $\Unrooted(X,S)$ we ask not only for a representation of $G[S]$, but of $G[N[S]]$, requiring additionally that the vertices in $N(S)$ are probes.
The frames representing such probes can be easily extended to the right in order to intersect the frame of $v$, which provides a clean intersection between the representation of $N[S]$ we are building in the current call and the representation of the rest of the graph that has been built so far.
Our goal is to find a vertex $r$ in $S$ that can be the root of $G[N[S]]$.
An obvious condition that the root must satisfy is as follows: for each component $C$ of $S-\set{r}$, the graph $G[C]$ has a representation with $r$ a root and $N(C)-\set{r}$ a set of probes; this can be verified by solving the rooted subproblem $\Rooted(X,r,C)$.
Another, more subtle condition is that no two frames representing vertices from different components of $S-\set{r}$ are nested, which implies that no probe in $N(S)$ can have neighbors in two distinct components of $S-\set{r}$; consult \Cref{fig:unrooted}.
If these conditions are satisfied, it is not difficult to assemble representations of $G[N[C]]$ for all components $C$ of $S-\set{r}$ into a desired representation of $G[N[S]]$.

Now, we turn this intuition into a formal description of the algorithm.

\begin{figure}[t]
\begin{center}
\begin{tikzpicture}[xscale=0.5,yscale=0.4]
  \draw[dashed] (0,0) rectangle (7,10.5);
  \draw (1,0.5) rectangle (4,6);
  \draw (3,1) rectangle (6,3);
  \draw (3,3.5) rectangle (9,5.5);
  \draw (1,7) rectangle (4,10);
  \draw (3,7.5) rectangle (6,9.5);
  \draw[dashed] (5,1.5) rectangle (11,2.5);
  \draw[dashed] (8,4) rectangle (11,5);
  \draw[dashed] (5,8) rectangle (11,9);
  \node[left] at (0,5.25) {$r$};
  \node[left] at (3,3.25) {$C_1$};
  \node[left] at (3,8.5) {$C_2$};
  \draw[decorate,decoration={brace,amplitude=5pt,mirror,raise=6pt}] (11,1.5) -- (11,9) node[midway,right,xshift=10pt]{$N[S]$};
\end{tikzpicture}
\end{center}
\caption{The second condition in the algorithm for $\Unrooted(X,S)$ with $C_1$ and $C_2$ the components of $S-\set{r}$.}
\label{fig:unrooted}
\end{figure}

\begin{algorithm}{$\Unrooted(X,S)$}
\item\label{step:unrooted-1}
Find a vertex $r\in S$ that satisfies the following two conditions, or fail if there is no such vertex:
\begin{itemize}
\item for every component $C$ of $S-\set{r}$, the subproblem $\Rooted(X,r,C)$ has a solution;
\item for every $p\in N(S)$, the set $N(p)\cap N[S]$ is contained in $C\cup\set{r}$ for some component $C$ of $S-\set{r}$ or is equal to $\set{r}$.
\end{itemize}
\item\label{step:unrooted-2}
Let $(N[C],\prec_C,\adj_C)$ be a solution to $\Rooted(X,r,C)$ for each component $C$ of $S-\set{r}$.
Let $\prec$ and $\adj$ be the unions of the relations $\prec_C$ and $\adj_C$ (respectively) over all components $C$ of $S-\set{r}$ with an additional related pair $p\adj r$ for every $p\in N(S)$ such that $N(p)\cap N[S]=\set{r}$.
Report $(N[S],\prec,\adj)$ as a solution to $\Unrooted(X,S)$.
\end{algorithm}

\begin{lemma}
\label{lem:unrooted}
The algorithm to solve\/ $\Unrooted(X,S)$ is correct assuming that the smaller subproblems have been correctly solved.
\end{lemma}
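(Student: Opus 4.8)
The plan is to prove two implications whose conjunction is the claimed correctness: \emph{soundness}, that any structure reported in \cref{step:unrooted-2} is a Burling structure around $S$ with $N(S)$ a set of probes; and \emph{completeness}, that whenever such a structure exists, the search in \cref{step:unrooted-1} does not fail. Together these give that the algorithm reports a solution exactly when one exists. Before either direction I would record three facts. First, $N(S)\subseteq X$, and when $X=N[v]$ one has $v\notin N(S)$ and $N(S)\subseteq N(v)$, so by triangle-freeness of $G$ the set $N(S)$ is independent; hence no two probes are adjacent and $N(p)\cap N[S]\subseteq S$ for every $p\in N(S)$. Second, since $S$ is connected, $r$ is adjacent to every component $C$ of $S-\set{r}$, so each $\Rooted(X,r,C)$ is a legitimate subproblem of size $\size{C}<\size{S}$. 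Third, for distinct components $C,C'$ of $S-\set{r}$ one has $N[C]\cap N[C']=\set{r}$: the components are non-adjacent, $r$ sees both, and the second condition of \cref{step:unrooted-1} forbids any probe from seeing two components.

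For soundness I would assemble the reported structure from the pieces $(N[C],\prec_C,\adj_C)$ by repeatedly applying \Cref{lem:outer-join} with $q=r$. As $r$ is a root of each $\Rooted(X,r,C)$ solution and the pieces pairwise meet only in $\set{r}$, every join is legitimate, $r$ stays a root, and the probes $N(C)-\set{r}$ stay probes; each leftover probe $p$ with $N(p)\cap N[S]=\set{r}$ is then incorporated by joining with the two-element Burling set $(\set{p,r},\emptyset,\set{(p,r)})$, in which $r$ is a root, so $p$ becomes a probe while $r$ remains a root. The underlying set of the result is $\set{r}\cup\bigcup_C C\cup N(S)=N[S]$, its relations are exactly the unions formed in \cref{step:unrooted-2}, and its edge set is $G[N[S]]$: edges inside each $N[C]$ are correct because the piece solves $\Rooted(X,r,C)$, the leftover edges $pr$ are inserted explicitly, and no further edges need representing since $N(S)$ is independent and distinct components are non-adjacent. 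Thus the reported $(N[S],\prec,\adj)$ is a valid solution.

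For completeness, suppose a solution $(N[S],\prec,\adj)$ exists. The graph $(N[S],\edgeset{\adj})$ is connected, so by \Cref{lem:root} it has a unique root $r$; moreover $r\in S$, because every probe $p\in N(S)$ is adjacent to some $s\in S$ and, being a probe, satisfies $p\adj s$ and so is not a root. I claim this $r$ satisfies both conditions of \cref{step:unrooted-1}. For the first, restricting $\prec$ and $\adj$ to $N[C]$ yields a Burling structure around $C$ in which $r$ is still a root and $N(C)-\set{r}$ are still probes (restriction only deletes relations), hence it solves the smaller subproblem $\Rooted(X,r,C)$, which is therefore reported as solvable by hypothesis. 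For the second, suppose toward a contradiction that a probe $p$ is adjacent to $s_1\in C_1$ and $s_2\in C_2$ with $C_1\neq C_2$; then $p\adj s_1$ and $p\adj s_2$, so \cref{axiom:2} gives, say, $s_1\prec s_2$. Taking a path from $s_1$ to $r$ inside the connected set $C_1\cup\set{r}$ and applying \Cref{lem:path} with $y=s_2$ (using $s_1\prec s_2$ and $r\nprec s_2$) produces a vertex $x_i\in C_1\cup\set{r}$ with $x_i\adj s_2$; but $x_i=r$ contradicts $r$ being a root, and $x_i\in C_1$ contradicts $C_1$ and $C_2$ being different components. Hence every probe sees at most one component, so \cref{step:unrooted-1} succeeds and, by soundness, \cref{step:unrooted-2} outputs a valid structure.

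The main obstacle is this final contradiction, that is, verifying the second condition of \cref{step:unrooted-1} for the canonical root: it is the one point where the interaction of the partial order $\prec$ with the acyclic relation $\adj$ is genuinely needed, via \Cref{lem:path}, in the same spirit as the proof of \Cref{lem:root}. The rest---checking that the outer-joins apply and that the edge set comes out exactly $G[N[S]]$---is routine once the independence of $N(S)$ and the intersections $N[C]\cap N[C']=\set{r}$ are established.
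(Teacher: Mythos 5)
Your proof is correct and takes essentially the same route as the paper's: soundness by assembling the solutions $(N[C],\prec_C,\adj_C)$ and the pendant probes through repeated applications of \Cref{lem:outer-join} at the shared root $r$, and completeness by taking a root of the given solution (necessarily in $S$), restricting to $N[C]$ for the first condition of \cref{step:unrooted-1}, and ruling out a probe adjacent to two components via \cref{axiom:2} and \Cref{lem:path}, exactly as in the paper. The only cosmetic differences are that you join the probes $p$ with $N(p)\cap N[S]=\set{r}$ one at a time rather than as a single structure $(P_r\cup\set{r},\emptyset,P_r\times\set{r})$, and you invoke \Cref{lem:root} for uniqueness of the root, which the argument does not actually need.
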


\begin{proof}
First, observe that $N(S)$ is an independent set.
Indeed, if $X=\emptyset$, then $N(S)=\emptyset$, and if $X=N[v]$ for a vertex $v$, then $N(S)\subseteq N(v)$, and $N(v)$ is an independent set by the assumption that $G$ is triangle-free.

Suppose that the algorithm reports $(N[S],\prec,\adj)$ as a solution to $\Unrooted(X,S)$.
Let $r$ be the vertex found by the algorithm in \cref{step:unrooted-1}.
Let a component of $S-\set{r}$ be called simply a \emph{component}.
Let $P_r=\set{p\in N(S)\colon N(p)\cap S=\set{r}}$.
By construction, the following Burling structures are the restrictions of $(N[S],\prec,\adj)$ to the sets $P_r\cup\set{r}$ and $N[C]$ for all components $C$, respectively:
\begin{itemize}
\item $(P_r\cup\set{r},\:\emptyset,\:P_r\times\set{r})$ with $r$ a root and $P_r$ a set of probes;
\item $(N[C],\prec_C,\adj_C)$ with $r$ a root and $N(C)-\set{r}$ a set of probes---a solution to $\Rooted(X,r,C)$ chosen by the algorithm in \cref{step:unrooted-2}, for each component $C$.
\end{itemize}
Their union gives rise to $(N[S],\prec,\adj)$.
The sets $P_r\cup\set{r}$ and $N[C]$ over all components $C$ are pairwise disjoint except at the common root $r$.
Therefore, we can repeatedly invoke \Cref{lem:outer-join} to infer that $(N[S],\prec,\adj)$ is a Burling set in which $r$ is a root and the union of the sets $P_r$ and $N[C]-\set{r}$ over all components $C$ is a set of probes.
The latter union is equal to $N(S)$.
Moreover, since $N(S)$ is an independent set, every edge between two vertices from $N[S]$ lies in $N[C]$ for some component $C$ or connects $r$ with a vertex in $P_r$.
Thus $(N[S],\edgeset{\adj})$ is an induced subgraph of $G$.
We conclude that $(N[S],\prec,\adj)$ is a correct solution to $\Unrooted(X,S)$.

Now, suppose there is a solution $(N[S],\prec,\adj)$ to $\Unrooted(X,S)$.
Let $r$ be a root of $(N[S],\prec,\adj)$.
Let a component of $S-\set{r}$ be called simply a \emph{component}.
For every component $C$, the triple $(N[C],{\prec}|_{N[C]},{\adj}|_{N[C]})$ is a Burling structure around $C$ with $r$ a root and $N(C)-\set{r}$ a set of probes---a solution to $\Rooted(X,r,C)$.
Let $p\in N(S)$, and suppose towards a contradiction that $p$ is a neighbor of two distinct components $C_1$ and $C_2$.
Since $p$ is a probe in $(N[S],\prec,\adj)$, there are $x_1\in C_1$ and $x_2\in C_2$ such that $p\adj x_1$ and $p\adj x_2$.
\Cref{axiom:2} entails $x_1\prec x_2$ or $x_2\prec x_1$; assume the former, without loss of generality.
By \Cref{lem:path}, since $r\nprec x_2$, a path from $x_1$ to $r$ in $C_1\cup\set{r}$ must contain a vertex $x$ such that $x\adj x_2$.
This is a contradiction, because neither $r\adj x_2$ nor any vertex in $C_1$ is a neighbor of $x_2$.
This shows that the sets $N(S)\cap N(C)$ for all components $C$ are pairwise disjoint.
Thus, the vertex $r$ satisfies both conditions verified by the algorithm in \cref{step:unrooted-1}, so the algorithm proceeds to \cref{step:unrooted-2}, where it computes (correctly, as we have shown in the first part of the proof) some solution to $\Unrooted(X,S)$.
\end{proof}

Now, we proceed with the description of the algorithm for a rooted subproblem $\Rooted(X,r,S)$.
Again, we start with some intuition described in terms of strict frame representations; see \Cref{fig:rooted} for an illustration.
We look for a representation of $G[N[S]]$ with $r$ a root and $N(S)-\set{r}$ a set of probes.
Consider any such representation, and let $F_r$ be the frame representing $r$.
Each component $C$ of $S-N(r)$ is represented entirely inside or entirely outside $F_r$.
Call $C$ an \emph{inner component} in the former and an \emph{outer component} in the latter case.
Each inner component $C$ satisfies $N(C)\subseteq N(r)$, because the representation of any path from $C$ to a vertex from $G-N[C]$ must cross $F_r$.
In the representation restricted to $N[C]$, the vertices in $N(S)$ are represented as probes, so that they can intersect $F_r$
Moreover, the way how inner components attach to their neighborhoods is very restricted---they form a nested family.
Each outer component $C$ has exactly one neighbor in $N(r)$, call it $q_C$.
In the representation restricted to $N[C]$, $q_C$ is a (unique) root, and the vertices in $N(C)-\set{q_C}$, which belong to $N(S)-\set{r}$, are probes, as required by the considered representation of $G[N[S]]$.
There is one more condition that must be satisfied by the vertices in $N(S)-\set{r}$, like in the unrooted problem.
These necessary conditions on inner and outer components and their neighborhoods can be verified looking only at the graph $G$, without the representation.
Moreover, representations of the graphs $G[N[C]]$ obtained from solving the subproblems $\Unrooted(N[r],C)$ (for inner components $C$) and $\Rooted(X,q_C,C)$ (for outer components $C$) can be combined into a desired representation of $G[N[S]]$.

\begin{figure}[t]
\begin{center}
\begin{tikzpicture}[scale=0.75]
  \draw (-1,-0.75) rectangle (7,7.25);
  \draw[fill=lightgray,even odd rule] (1,-0.5) rectangle (6.5,7) (1.5,0) rectangle (6,6.5);
  \draw[fill=lightgray,even odd rule] (2.5,0.25) rectangle (5.5,4) (3,0.75) rectangle (5,3.5);
  \draw[fill=lightgray] (9.5,1.25) rectangle (11,1.75);
  \draw[fill=lightgray] (9.5,2.5) rectangle (11,3);
  \draw[fill=lightgray] (9.5,4.75) rectangle (11,5.25);
  \draw[fill=lightgray] (9.5,5.5) rectangle (11,6);
  \draw (4.5,1) rectangle (10,2);
  \draw (4.5,2.25) rectangle (10,3.25);
  \draw (5.5,4.5) rectangle (10,6.25);
  \node[left] at (-1,3.25) {$r$};
  \node[left] at (1,3.25) {$C_1$};
  \node[left] at (2.5,2.125) {$C_2$};
  \node[right] at (11,1.5) {$C_3$};
  \node[right] at (11,2.75) {$C_4$};
  \node[right] at (11,5) {$C_5$};
  \node[right] at (11,5.75) {$C_6$};
  \node[left] at (4.5,1.5) {$q_{C_3}$};
  \node[left] at (4.5,2.75) {$q_{C_4}$};
  \node[left] at (5.5,5.375) {$q_{C_5}=q_{C_6}$};
\end{tikzpicture}
\end{center}
\caption{A solution to a subproblem $\Rooted(X,r,S)$.
The graph induced by $S-N(r)$ has six components $C_1,\ldots,C_6$, of which $C_1$ and $C_2$ are inner while $C_3,\ldots,C_6$ are outer.
Gray areas are where particular components are represented.}
\label{fig:rooted}
\end{figure}
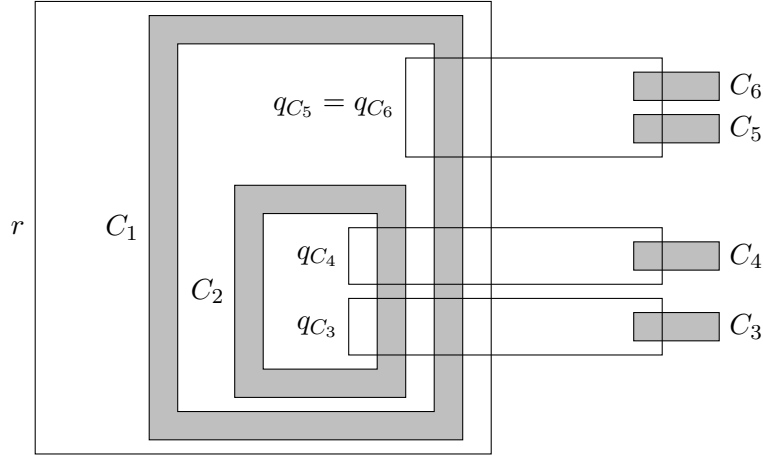

\begin{algorithm}{$\Rooted(X,r,S)$}
\item\label{step:rooted-1}
Classify each component $C$ of $S-N(r)$ as
\begin{itemize}
\item an \emph{inner component} if $N(C)\subseteq N(r)$ and the subproblem $\Unrooted(N[r],C)$ has a solution;
\item an \emph{outer component} if $\size{N(C)\cap N(r)}=1$ and the subproblem $\Rooted(X,q_C,C)$ has a solution, where $q_C$ is the vertex such that $N(C)\cap N(r)=\set{q_C}$.
\end{itemize}
Fail if some component of $S-N(r)$ cannot be classified either way.
For components that can be classified both ways, choose arbitrarily.
\item\label{step:rooted-2}
Check the following condition and fail if it does not hold: for every $p\in N(S)-\set{r}$, the set $N(p)\cap N[S]$ is contained in the union of $\set{r}$ and all inner components, or is contained in $C\cup\set{q_C}$ for some outer component $C$, or is equal to $\set{q}$ for some $q\in N(r)\cap S$.
\item\label{step:rooted-3}
Let $\famC$ be the family of all inner components.
Check whether $\famC$ is nested and fail if not.
Compute a nesting order $<$ on $\famC$.
\item\label{step:rooted-4}
Let $(N[C],\prec_C,\adj_C)$ be a solution to $\Unrooted(N[r],C)$ for every inner component $C$ or to $\Rooted(X,q_C,C)$ for every outer component $C$.
Let $\prec$ and $\adj$ be the unions of the relations $\prec_C$ and $\adj_C$ (respectively) over all components $C$ with the following additional related pairs:
\begin{itemize}
\item $x\prec\alignto{y}{l}{r}$ for all $x\in C$, for all $C\in\famC$;
\item $x\prec y$ for all $x\in C_1$ and all neighbors $y$ of $N(C_1)$ in $C_2$, for all $C_1,C_2\in\famC$ with $C_1<C_2$;
\item $\alignto{p}{r}{q}\adj\alignto{q}{l}{r}$ for all $q\in N(r)\cap N[S]$;
\item $p\adj q$ for all $q\in N(r)\cap S$ and all $p\in N(S)-\set{r}$ such that $N(p)\cap N[S]=\set{q}$.
\end{itemize}
Report $(N[S],\prec,\adj)$ as a solution to $\Rooted(X,r,S)$.
\end{algorithm}

\begin{lemma}
\label{lem:rooted}
The algorithm to solve\/ $\Rooted(X,r,S)$ is correct assuming that the smaller subproblems have been correctly solved.
\end{lemma}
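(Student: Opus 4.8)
The plan is to follow the two-directional template of \Cref{lem:unrooted}: first show that any triple the algorithm reports is a genuine solution, and then show that whenever $\Rooted(X,r,S)$ admits a solution the algorithm does not fail, so that by the first part it outputs a correct one. The fact I would isolate at the outset is a dichotomy for the components of $S-N(r)$. In any solution $(N[S],\prec,\adj)$ with root $r$, no vertex of $S-N(r)$ is adjacent to $r$; hence for an edge $xx'$ inside such a component, \cref{axiom:3} forces $x'\prec r$ from $x\adj x'$ and $x\prec r$, while \cref{axiom:4} forces $x\prec r$ from $x\adj x'$ and $x'\prec r$ (as $x\adj r$ is impossible). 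Thus $x\prec r$ and $x'\prec r$ are equivalent, and by connectivity each component $C$ is uniformly \emph{inner} ($x\prec r$ for all $x\in C$) or \emph{outer} ($x\nprec r$ for all $x\in C$). As in \Cref{lem:unrooted}, I would also record that $N(S)$ is independent, so that edges of $G[N[S]]$ occur only inside the pieces being assembled.

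For the soundness direction I would exhibit the reported $(N[S],\prec,\adj)$ as an iterated join of the chosen sub-solutions and the pairs added in \cref{step:rooted-4}. The vertex $r$, its neighbors $q\in N(r)\cap N[S]$ (each contributing $q\adj r$), and the probes $p$ with $N(p)\cap N[S]=\set{q}$ for a separator vertex $q\in N(r)\cap S$ form a base Burling structure with $r$ a root. Each outer component, whose sub-solution solving $\Rooted(X,q_C,C)$ has $q_C$ as its root, is glued on at $q_C$ by a single application of \Cref{lem:outer-join}, since $q_C$ is exposed in the structure built so far. Each inner component $C$, whose sub-solution solving $\Unrooted(N[r],C)$ has $N(C)\subseteq N(r)$ as a set of probes, is glued on by \Cref{lem:inner-join} with $N(C)$ as the common probe set; the pairs $x\prec r$ and the pairs $x\prec y$ for $C_1<C_2$ are exactly the relations that lemma adds, and processing the inner components in the nesting order keeps its hypotheses satisfied. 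The outcome is a Burling set in which $r$ is a root and $N(S)-\set{r}$ is a set of probes; the confinement condition checked in \cref{step:rooted-2}, together with independence of $N(S)$, guarantees $(N[S],\edgeset{\adj})=G[N[S]]$, so the reported triple is a correct solution.

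For the completeness direction I would start from an arbitrary solution and extract the data the algorithm needs. The dichotomy classifies every component of $S-N(r)$. For an inner $C$, a neighbor $y$ lying outside $S$ is a probe in $N(S)-\set{r}$, which cannot satisfy $y\prec r$, so \cref{axiom:3,axiom:4} force $y\adj r$; hence $N(C)\subseteq N(r)$, and the restriction of the solution to $N[C]$ is a Burling structure around $C$ in which $N(C)$ is a set of probes, that is, a solution to $\Unrooted(N[r],C)$. For an outer $C$, the restriction similarly solves $\Rooted(X,q_C,C)$ for the separator $q_C$ once its uniqueness is known. So \cref{step:rooted-1} succeeds. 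The confinement condition of \cref{step:rooted-2} I would verify exactly as in \Cref{lem:unrooted}: a probe reaching two forbidden regions gives $p\adj x_1$ and $p\adj x_2$ with $x_1,x_2$ in different pieces, and then \cref{axiom:2} orients them while \Cref{lem:path} propagates the comparison across a path inside one piece to produce a non-existent edge.

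The main obstacle is \cref{step:rooted-3}: showing that in a genuine solution the inner components form a nested family, with the inclusion order on their neighborhoods serving as a nesting order. Two ingredients are required. For \emph{laminarity}, if inner components $C_1$ and $C_2$ share a neighbor, I would take a shared probe $q$ with $q\adj a_1$ and $q\adj a_2$ ($a_i\in C_i$), invoke \cref{axiom:2} to orient $a_1\prec a_2$, and then use \Cref{lem:path} inside the components to transfer the comparison to all remaining neighbors, showing that one of $N(C_1),N(C_2)$ contains the other; an incomparable pair of neighbors would, by the same propagation, force a relation that the separation of $C_1$ and $C_2$ forbids. For \emph{homogeneity}, when $C_1<C_2$ I would show that every vertex of $C_2$ is adjacent to all or none of $N(C_1)$, again by applying \cref{axiom:2,axiom:3} to the probes of $N(C_1)$, which all feed into $C_1$ in parallel. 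Carrying this case analysis through the axioms, while tracking which relations are excluded because $C_1$ and $C_2$ are distinct components, is the delicate heart of the proof. The companion uniqueness $\size{N(C)\cap N(r)}=1$ for outer components, already needed in \cref{step:rooted-1}, I would obtain by an argument in the spirit of \Cref{lem:root}. With laminarity, homogeneity, and uniqueness in hand, $r$ and the inclusion order pass every test of the algorithm, and the soundness direction certifies the output.
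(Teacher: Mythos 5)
Your skeleton follows the paper's proof closely (the inner/outer dichotomy, restriction arguments for the sub-solutions, \Cref{lem:root} for uniqueness of $q_C$, and \cref{axiom:2} with \Cref{lem:path} for nestedness), but the assembly order in your soundness direction breaks down. You put the probes $p\in P_q$ (contributing $p\adj q$) into the base structure and attach the outer components at their roots $q_C$ via \Cref{lem:outer-join} \emph{before} performing any inner joins. After either operation the vertex $q$ acquires incoming relations ($p\adj q$, and $x\adj q_C$ for the vertices of the outer component adjacent to its root $q_C$), so $q$ remains exposed but is no longer a probe. Yet \Cref{lem:inner-join}, which is the only tool available for gluing an inner component $C$, requires $S_1\cap S_2=N(C)$ to be a set of probes in \emph{both} structures, and the same vertex $q\in N(r)\cap S$ can simultaneously serve as the attachment vertex of an outer component (or have $P_q\neq\emptyset$) and lie in $N(C)$ for an inner component $C$ --- this is exactly the situation of $q_{C_3}$ and $q_{C_4}$ crossing the regions of the inner components in \Cref{fig:rooted}. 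The paper proceeds in the opposite order: it starts from $(Q\cup\set{r},\emptyset,Q\times\set{r})$ with $Q=N(r)\cap N[S]$, glues the inner components first via \Cref{lem:inner-join} (which preserves $Q$ as a set of probes, with homogeneity of $N(C_i)$ for the part already built supplying the required set $S'$), and only then attaches the outer components and the sets $P_q$ via \Cref{lem:outer-join}, which merely needs $q$ to be exposed --- a property that does survive. Note also that the inner components must be processed so that nesting-\emph{larger} ones come first ($C_i<C_j$ forces $C_j$ to be glued before $C_i$); ``in the nesting order'' leaves this direction unspecified, and the reverse order fails. As written, your construction cannot invoke \Cref{lem:inner-join} at all.

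A second gap is in the completeness direction: you verify the condition of \cref{step:rooted-2} and nestedness for the classification induced by the given solution, but \cref{step:rooted-1} lets the algorithm classify a component that qualifies both ways \emph{arbitrarily}, and that choice may disagree with whether the component is inside or outside $r$ in the solution. Since the tests in \cref{step:rooted-2,step:rooted-3} refer to the algorithm's family $\famC$ of inner components, not the solution's, your argument only shows that the algorithm succeeds under one lucky sequence of choices. The paper closes this by observing that a doubly classifiable component $C$ satisfies $N(C)\subseteq N(r)$ and $\size{N(C)\cap N(r)}=1$, hence $\size{N(C)}=1$ with the unique neighbor in $N(r)\cap S$; consequently no vertex of $N(S)-\set{r}$ has a neighbor in $C$, the condition of \cref{step:rooted-2} is insensitive to how $C$ is classified, and a nested family remains nested when a set with a singleton neighborhood is added or removed. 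Without this robustness step, the proof of the completeness direction is incomplete.
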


\begin{proof}
Like in the proof of \Cref{lem:unrooted}, observe that $N(S)-\set{r}$ is an independent set.
Indeed, if $X=\emptyset$, then $N(S)-\set{r}=\emptyset$, and if $X=N[v]$ for a vertex $v$, then $N(S)-\set{r}\subseteq N(v)$, and $N(v)$ is an independent set by the assumption that $G$ is triangle-free.

First, we prove that if the algorithm reports a solution to $\Rooted(X,r,S)$, then the solution is correct.
Suppose that the algorithm reports $(N[S],\prec,\adj)$ as a solution to $\Rooted(X,r,S)$.
Let $Q=N(r)\cap N[S]$.
Since $G$ is triangle-free, $Q$ is an independent set.

By construction, the following Burling structures are the restrictions of $(N[S],\prec,\adj)$ to the sets $Q\cup\set{r}$ and $N[C]$ for all inner components $C$, respectively:
\begin{itemize}
\item $(Q\cup\set{r},\:\emptyset,\:Q\times\set{r})$ with $r$ a root and $Q$ a set of probes;
\item $(N[C],\prec_C,\adj_C)$ with $N(C)$ a set of probes---a solution to $\Unrooted(N[r],C)$ chosen by the algorithm in \cref{step:rooted-4}, for each inner component $C$.
\end{itemize}
Let $<$ be the nesting order on the inner components computed in \cref{step:rooted-3}.
Enumerate the inner components as $C_1,\ldots,C_k$ so that if $C_i<C_j$, then $i>j$.
It follows that for every $i\in\set{1,\ldots,k}$, the set $N(C_i)$ is homogeneous for $\set{r}$ and for each of the sets $C_1,\ldots,C_{i-1}$.
Let $S_0=Q\cup\set{r}$ and $S_i=Q\cup\set{r}\cup N[C_1]\cup\cdots\cup N[C_i]$ for $i\in\set{1,\ldots,k}$.
It follows that for every $i\in\set{1,\ldots,k}$, we have $S_i-Q=\set{r}\cup C_1\cup\cdots\cup C_i$, and the set $N(C_i)$ (which is a subset of $Q$) is homogeneous for $S_{i-1}-Q$, whence it follows that there is a set $S_{i-1}'\subseteq S_{i-1}-Q$ such that $S_{i-1}'=\set{x\in S_{i-1}\colon q\adj x}$ for all $q\in N(C_i)$.

We prove, for $i\in\set{0,\ldots,k}$ by induction, that $(S_i,{\prec}|_{S_i},{\adj}|_{S_i})$ is a Burling set with $r$ a root and with $Q$ a set of probes.
This holds for $i=0$, as $(S_0,{\prec}|_{S_0},{\adj}|_{S_0})=(Q\cup\set{r},\:\emptyset,\:Q\times\set{r})$.
Now, let $i\in\set{1,\ldots,k}$, and suppose that $(S_{i-1},{\prec}|_{S_{i-1}},{\adj}|_{S_{i-1}})$ is a Burling set with $r$ a root and with $Q$ a set of probes.
We have
\[S_i=S_{i-1}\cup N[C_i]{,}\quad{\prec}|_{S_i}={\prec}|_{S_{i-1}}\cup{\prec_{C_i}}\cup(C_i\times S_{i-1}'){,}\quad\text{and}\quad{\adj}|_{S_i}={\adj}|_{S_{i-1}}\cup{\adj_{C_i}}{.}\]
Since $N(C_i)=N[C_i]\cap Q=N[C_i]\cap S_{i-1}$ and $(N[C_i],\prec_C,\adj_C)$ is a Burling set with $N(C_i)$ a set of probes, we can invoke \Cref{lem:inner-join} to infer that $(S_i,{\prec}|_{S_i},{\adj}|_{S_i})$ is a Burling set with $r$ a root and with $Q$ a set of probes, as needed for the induction step.

Let $P_q=\set{p\in N(S)-\set{r}\colon N(p)\cap N[S]=\set{q}}$ for every $q\in Q\cap S$.
By construction, the following Burling structures are the restrictions of $(N[S],\prec,\adj)$ to the sets $P_q$ for all $q\in Q\cap S$ and $N[C]$ for all outer components $C$, respectively:
\begin{itemize}
\item $(P_q\cup\set{q},\:\emptyset,\:P_q\times\set{q})$ with $q$ a root and $P_q$ a set of probes, for each $q\in Q\cap S$;
\item $(N[C],{\prec}_C,{\adj}_C)$ with $q_C$ a root and $N(C)-\set{q_C}$ a set of probes---a solution to $\Rooted(X,q_C,C)$ chosen by the algorithm in \cref{step:rooted-4}, for each outer component $C$.
\end{itemize}
The union of the Burling set $(S_k,{\prec}|_{S_k},{\adj}|_{S_k})$ and the Burling structures above gives rise to $(N[S],\prec,\adj)$.
The sets $P_q$ for all $q\in Q\cap S$ and $N[C]$ for all outer components $C$ are disjoint from each other and from $S_k$ except at their roots $q$ and $q_C$, respectively.
Since these roots are exposed in $(S_k,{\prec}|_{S_k},{\adj}|_{S_k})$ (and remain exposed), we can repeatedly invoke \Cref{lem:outer-join} to infer that $(N[S],\prec,\adj)$ is a Burling set in which $r$ is a root and the union of the sets $Q-S$, $P_q$ for all $q\in Q\cap S$, and $N(C)-\set{q_C}$ for all outer components $C$ is a set of probes.
The latter union is equal to $N(S)-\set{r}$.
Moreover, since $N(S)-\set{r}$ is an independent set, every edge between two vertices in $N[S]$ lies in $N[C]$ for some component $C$, or connects $r$ with an element of $Q$, or connects $q$ with an element of $P_q$ for some $q\in Q\cap S$.
Thus $(N[S],\edgeset{\adj})$ is an induced subgraph of $G$.
We conclude that $(N[S],\prec,\adj)$ is a correct solution to $\Rooted(X,r,S)$.

Now, we prove that if there is a solution to $\Rooted(X,r,S)$, then the algorithm reports one (which is then correct, as we have proved above).
Let $(N[S],\prec,\adj)$ be a solution to $\Rooted(X,r,S)$, that is, a Burling structure around $S$ with $r$ a root and $N(S)-\set{r}$ a set of probes.
For clarity, we proceed by proving an enumerated series of claims, which eventually lead us to the desired conclusion.

A component $C$ is \emph{inside} $r$ if $x\prec r$ for all $x\in C$, and it is \emph{outside} $r$ if $x\nprec r$ for all $x\in C$.

\begin{enumerate}
\item\label{item:component}
Every component is either inside or outside $r$.
\end{enumerate}
Suppose that, on the contrary, a component $C$ contains vertices $x$ and $y$ such that $x\prec r$ and $y\nprec r$.
By \Cref{lem:path}, the path between $x$ and $y$ in $C$ contains a vertex $z$ such that $z\adj r$.
This contradicts the assumption that $C\subseteq S-N(r)$.

\begin{enumerate}[resume]
\item
The following holds for every $q\in N(r)\cap N[S]$:
\begin{enumerate}
\item\label{item:q-prec} there is no $x\in N[S]$ with $q\prec x$;
\item\label{item:q-adj} every $x\in N[S]-\set{r}$ with $q\adj x$ belongs to a component inside $r$;
\item\label{item:prec-q} every $x\in N[S]$ with $x\prec q$ belongs to a component outside $r$;
\item\label{item:adj-q} every $x\in N[S]$ with $x\adj q$ belongs to a component outside $r$ or to $N(S)-N[r]$.
\end{enumerate}
\end{enumerate}
Since $q\in N(r)$ and $r$ is a root, we have $q\adj r$.
If $q\prec x\in N[S]$, then \cref{axiom:3} entails $r\prec x$, which contradicts $r$ being a root.
This shows \ref{item:q-prec}.
For the proof of \ref{item:q-adj}, suppose $q\adj x\in N[S]-\set{r}$.
Since $N(S)-\set{r}$ is a set of probes in $(N[S],\prec,\adj)$, we have $x\in S$.
\Cref{axiom:2} and the assumption that $r$ is a root imply $x\prec r$, which implies that $x\notin N(r)$ and therefore, by \ref{item:component}, $x$ belongs to a component inside $r$.
For the proof of \ref{item:prec-q}, suppose $N[S]\ni x\prec q$.
It follows that $x$ is not a root or a probe, and $x\notin N(r)$ by \ref{item:q-prec}, so $x\in S-N(r)$.
If $x\prec r$, then \cref{axiom:1} entails either $q\prec r$, which contradicts \ref{item:q-prec}, or $r\prec q$, which contradicts $r$ being a root.
Thus $x\nprec r$ and therefore, by \ref{item:component}, $x$ belongs to a component outside $r$.
For the proof of \ref{item:adj-q}, suppose $N[S]\ni x\adj q$.
It follows that $x$ is not a root, and \ref{item:q-adj} implies $x\notin N(r)$, so $x\in N[S]-N[r]$.
Thus $x\in S-N(r)$ or $x\in N(S)-N[r]$.
In the former case, if $x\prec r$, then \cref{axiom:3} entails $q\prec r$ contradicting \ref{item:q-prec}, so $x\nprec r$ and therefore, by \ref{item:component}, $x$ belongs to a component outside $r$.

\begin{enumerate}[resume]
\item\label{item:inner}
For every component $C$ inside $r$, we have $N(C)\subseteq N(r)$, and $(N[C],{\prec}|_{N[C]},{\adj}|_{N[C]})$ is a solution to $\Unrooted(N[r],C)$.
\end{enumerate}
For the proof, let $C$ be a component inside $r$.
Let $q\in N(C)$.
If $q\prec r$, then $q\notin N(r)$ and $q$ is not a probe of $(N[S],\prec,\adj)$, so $q\notin N(S)$ and thus $q\in S-N(r)$, which contradicts the assumption that $C$ is a component.
Thus $q\nprec r$.
\Cref{lem:path} applied to the single-edge path between $q$ and the neighbor of $q$ in $C$ yields $q\adj r$, so in particular $q\in N(r)$.
This shows that $N(C)\subseteq N(r)$.
Furthermore, by \ref{item:q-prec}, \ref{item:prec-q}, and \ref{item:adj-q}, $q$ is a probe of $(N[C],{\prec}|_{N[C]},{\adj}|_{N[C]})$.
This shows that $(N[C],{\prec}|_{N[C]},{\adj}|_{N[C]})$ is a solution to $\Unrooted(N[r],C)$.

\begin{enumerate}[resume]
\item\label{item:outer}
For every component $C$ outside $r$, we have $\size{N(C)\cap N(r)}=1$, and $(N[C],{\prec}|_{N[C]},{\adj}|_{N[C]})$ is a solution to $\Rooted(X,q_C,C)$, where $q_C$ is the vertex such that $N(C)\cap N(r)=\set{q_C}$.
\end{enumerate}
For the proof, let $C$ be a component outside $r$.
We have $N(C)\cap N(r)\neq\emptyset$, and it follows from \ref{item:q-prec} and \ref{item:q-adj} that every element of $N(C)\cap N(r)$ is a root in $(N[C],{\prec}|_{N[C]},{\adj}|_{N[C]})$.
Since $N[C]$ is connected, by \Cref{lem:root}, such a root is unique, so $\size{N(C)\cap N(r)}=1$.
Let $q_C$ be the root, so that $N(C)\cap N(r)=\set{q_C}$.
For every $p\in N(C)-\set{q_C}$, we have $p\in N(S)-\set{r}$, so $p$ is a probe.
This shows that $(N[C],{\prec}|_{N[C]},{\adj}|_{N[C]})$ is a solution to $\Rooted(X,q_C,C)$.

\begin{enumerate}[resume]
\item\label{item:probes}
For every $p\in N(S)-\set{r}$, the set $N(p)\cap N[S]$ is contained in the union of $\set{r}$ and the components inside $r$, or is contained in $C\cup\set{q_C}$ for some component $C$ outside $r$, or is equal to $\set{q}$ for some $q\in N(r)\cap S$.
\end{enumerate}
For the proof, let $p\in N(S)-\set{r}$ and $N_p=N(p)\cap N[S]$.
Let $U$ be the union of $\set{r}$, $N(r)\cap S$, and all components outside $r$.
Since $N(S)-\set{r}$ is a set of probes in $(N[S],\prec,\adj)$, every $x\in N_p$ satisfies $p\adj x\notin N(S)-\set{r}$.
In particular, $N_p\subseteq S\cup\set{r}$, so $N_p$ is contained in the union of $U$ and all components inside $r$.
If $N_p\nsubseteq U$, then there is a component $C$ inside $r$ with $N_p\cap C\neq\emptyset$, whence it follows by \ref{item:inner} that $p\in N(r)$ and therefore $r\in N_p\cap U$.
Consequently, $N_p$ satisfies the condition claimed in \ref{item:probes} or $\size{N_p\cap U}\geq 2$.
Suppose the latter.
This and \cref{axiom:2} imply that there are $x,y\in U$ with $p\adj x$, $p\adj y$, and $x\prec y$.
We have $x\neq r$ (as $r$ is the root) and $y\neq r$ (as $x\prec r$ would imply $x\notin U$).
If $x\in N(r)\cap S$, then $x\adj r$, so \cref{axiom:3} entails $r\prec y$, a contradiction.
Thus $x\in C$ for some component $C$ outside $r$.
Since $q_C$ is a root in $(N[C],{\prec}|_{N[C]},{\adj}|_{N[C]})$, we have neither $q_C\prec y$ nor $q_C\adj y$.
By \Cref{lem:path}, a path from $x$ to $q_C$ with all intermediate vertices in $C$ contains a vertex $z$ such that $z\adj y$, where $z\neq y$ by the above.
This shows that $y\in N[C]$, so $y\in C$ or $y=q_C$.
We have thus shown that $N_p\cap U\subseteq C\cup\set{q_C}$ for some component $C$ outside $r$.
Since $N_p\nsubseteq U$ would imply $r\in N_p\cap U$ (as we have shown before), and the latter does not hold, we conclude that $N_p=N_p\cap U\subseteq C\cup\set{q_C}$.

\begin{enumerate}[resume]
\item\label{item:nested}
The family $\famC$ comprising the components inside $r$ is nested.
\end{enumerate}
Let $C\in\famC$ and $x\in C$.
As in the proof of \ref{item:component}, for every $C'\in\famC-\set{C}$, \Cref{lem:path} implies that if $x'\prec x$ for some $x'\in C'$, then $x'\prec x$ for all $x'\in C'$.
In that case, say that $C'$ is \emph{inside} $x$.
Let $C'<C$ denote that $C'$ is inside $x$ for some $x\in C$.
Transitivity of $\prec$ implies transitivity of $<$, which implies that $C_1<C_2$ and $C_2<C_1$ cannot hold simultaneously for any $C_1,C_2\in\famC$.
Now, for the proof of \ref{item:nested}, consider distinct $C_1,C_2\in\famC$.
For every $q\in N(C_1)\cap N(C_2)$, if $x_1$ and $x_2$ are neighbors of $q$ in $C_1$ and $C_2$, respectively, then $q\adj x_1$ and $q\adj x_2$ by \ref{item:adj-q}, which implies $x_1\prec x_2$ or $x_2\prec x_1$ by \cref{axiom:2}.
Consequently, if $N(C_1)\cap N(C_2)\neq\emptyset$, then $C_1<C_2$ or $C_2<C_1$.
Now, consider any $C_1,C_2\in\famC$ with $C_1<C_2$.
Let $q\in N(C_1)$ and $x_1$ be a neighbor of $q$ in $C_1$, so that $q\adj x_1$ by \ref{item:adj-q}.
As we have already shown, if $x_2$ is a neighbor of $q$ in $C_2$, then $x_1\prec x_2$ or $x_2\prec x_1$; the latter would imply $C_2<C_1$, contradicting the assumption that $C_1<C_2$, so $x_1\prec x_2$.
Conversely, by \cref{axiom:4}, if $x_1\prec x_2\in C_2$, then $q\adj x_2$ or $q\prec x_2$; the latter is impossible by \ref{item:q-prec}, so $x_2$ is a neighbor of $q$.
Hence, the neighbors of $q$ in $C_2$ are exactly the vertices $x_2\in C_2$ such that $C_1$ is inside $x_2$, showing $N(C_1)$ is homogeneous for $C_2$.
We conclude that the family $\famC$ is nested.

Now, we use the claims above to show that the algorithm reports a solution to $\Rooted(X,r,S)$.
By \ref{item:inner}, \ref{item:outer}, and the assumption that the smaller subproblems have been correctly solved, every component inside $r$ can be classified as an inner component, and every component outside $r$ can be classified as an outer component.
By \ref{item:component}, every component is either inside or outside $r$, so the algorithm, in \cref{step:rooted-1}, classifies every component as either inner or outer and proceeds to \cref{step:rooted-2}.
If a component $C$ can be classified both ways (where the arbitrary choice made by the algorithm may not agree with being inside or outside $r$), then the conditions $N(C)\subseteq N(r)$ and $\size{N(C)\cap N(r)}=1$ imply $\size{N(C)}=1$, and (since $C$ is a component) the unique element of $N(C)$ belongs to $N(r)\cap S$; in particular, no vertex in $N(S)-\set{r}$ has a neighbor in $C$.
This and \ref{item:probes} imply that the algorithm verifies the condition in \cref{step:rooted-2} positively and proceeds to \cref{step:rooted-3}.
By \ref{item:nested} and the fact that a nested family remains nested after removing some set or adding a set $C$ with $\size{N(C)}=1$, the family of all inner components is nested.
Therefore, the algorithm proceeds to \cref{step:rooted-4}, where it computes (correctly) some solution to $\Rooted(X,r,S)$.
\end{proof}

The graph $G$ is a Burling graph if and only if the subproblem $\Unrooted(\emptyset,C)$, which asks for a Burling structure $(C,\prec_C,\adj_C)$, has a solution for every component $C$ of $V$.
From \Cref{lem:unrooted,lem:rooted}, by induction on the size of the subproblems, we conclude that the algorithm solves the subproblem $\Unrooted(\emptyset,C)$ correctly for each component $C$ of $V$.
If it finds a solution $(C,\prec_C,\adj_C)$ for each component $C$ of $V$ (rather than reporting no solution), a Burling set $(V,\prec,\adj)$ with $G=(V,\edgeset{\adj})$ is obtained by defining $\prec$ and $\adj$ as the unions of $\prec_C$ and $\adj_C$ (respectively) over all components $C$ of $V$.

It is clear that the algorithm to solve a subproblem of the form $\Unrooted(X,S)$ or $\Rooted(X,r,S)$ runs in polynomial time assuming that solutions to smaller subproblems have been computed beforehand.
Since the number of subproblems is polynomial, the total running time to solve $\Unrooted(\emptyset,C)$ on all components $C$ of $V$ is polynomial.
Since checking whether $G$ is triangle-free can also be done in polynomial time, we arrive at the following conclusion.

\begin{theorem}
\label{thm:recognition-abstract}
There is a polynomial-time algorithm that, given a graph\/ $G$, either declares\/ $G$ a Burling graph and produces a Burling set\/ $(V,\prec,\adj)$ such that\/ $G=(V,\edgeset{\adj})$, or reports that\/ $G$ is not a Burling graph.
\end{theorem}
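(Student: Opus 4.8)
The plan is to run the dynamic program over the subproblems $\Unrooted(X,S)$ and $\Rooted(X,r,S)$ defined above, solving them in order of non-decreasing size, and then to read off a Burling set for the whole of $G$ from the solutions of the outermost unrooted subproblems. The first thing I would do is dispose of the trivial obstruction: test whether $G$ contains a triangle, which takes polynomial time, and report that $G$ is not a Burling graph if it does, since strict frame graphs---and hence Burling graphs---are triangle-free. From then on I may assume $G$ is triangle-free, which is exactly the standing hypothesis under which \Cref{lem:unrooted,lem:rooted} were proved.

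The core of the argument is an induction on the size of a subproblem, and before invoking it I would check that the recursion is well-founded, i.e.\ every recursive call issued while solving a subproblem is to a strictly smaller one. Indeed, solving $\Unrooted(X,S)$ only consults $\Rooted(X,r,C)$ for components $C$ of $S-\set{r}$, and each such $C$ satisfies $\size{C}<\size{S}$; solving $\Rooted(X,r,S)$ only consults $\Unrooted(N[r],C)$ and $\Rooted(X,q_C,C)$ for components $C$ of $S-N(r)$, and since $S$ contains a neighbor of $r$ by definition, $S-N(r)$ is a proper subset of $S$ and again $\size{C}<\size{S}$. This makes the size order a legitimate evaluation order and licenses an induction in which, when a subproblem is processed, all strictly smaller subproblems have already been correctly solved. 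The inductive step is then precisely the content of \Cref{lem:unrooted,lem:rooted}: each lemma asserts that the corresponding solve routine is correct provided the smaller subproblems were correctly solved---that it reports only valid solutions, and reports some solution whenever one exists. Hence every subproblem is solved correctly.

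It then remains to connect the subproblems to $G$ as a whole. I would use the equivalence that $G$ is a Burling graph if and only if $\Unrooted(\emptyset,C)$ has a solution for every component $C$ of $V$. For the forward direction, a Burling set realizing $G$ restricts on each component $C$ to a Burling structure around $C$ (axioms are preserved under passing to an induced substructure), and since $N(C)=\emptyset$ the probe requirement is vacuous, so the restriction solves $\Unrooted(\emptyset,C)$. If some $\Unrooted(\emptyset,C)$ has no solution, the algorithm declares $G$ not a Burling graph; otherwise, with solutions $(C,\prec_C,\adj_C)$ in hand, I would set $\prec=\bigcup_C\prec_C$ and $\adj=\bigcup_C\adj_C$. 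Because distinct components of $V$ are pairwise non-adjacent and each relation lives inside a single $N[C]=C$, no axiom has a premise spanning two components, so every axiom holds---vacuously across components and by hypothesis within each---while $\prec$ remains a strict partial order and $\adj$ remains acyclic. Thus $(V,\prec,\adj)$ is a Burling set with $G=(V,\edgeset{\adj})$.

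Finally, for the running time I would combine the polynomial bound on the number of subproblems (in fact $\Oh(n^2)$, as already noted) with the polynomial cost of each individual solve routine and of the triangle-freeness test, giving an overall polynomial bound. I do not expect any single step here to be a genuine obstacle: the substantive work is already done in \Cref{lem:unrooted,lem:rooted}, and what is left is to confirm well-foundedness of the recursion, to justify the per-component reduction, and to observe that the disjoint union over components introduces no new relations. The point requiring the most care is making the induction precise---pinning down that the phrase ``smaller subproblems have been correctly solved'' is exactly the hypothesis of the two lemmas and that every call respects the size order.
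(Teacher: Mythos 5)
Your proposal is correct and follows essentially the same route as the paper: the same induction on subproblem size with \Cref{lem:unrooted,lem:rooted} as the inductive step, the same reduction to solving $\Unrooted(\emptyset,C)$ on each component $C$ of $V$, the same union over components to assemble $(V,\prec,\adj)$, and the same polynomial accounting plus the triangle-freeness pretest. You merely spell out a few points the paper leaves implicit (well-foundedness of the recursion, the restriction argument for the forward direction, and the axiom check for the disjoint union), all of which are sound.
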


\section{Strict frame representation}
\label{sec:construction}

In this section, we provide a polynomial-time algorithm to turn a Burling set into a strict frame representation of it.
The existence of such a representation, claimed in \Cref{thm:characterization}, was proved by Pournajafi and Trotignon~\cite{PT23}, but that proof does not produce the representation explicitly.
Instead, it relies on the existence of a strict frame representation (constructed in~\cite{PKK+13}) of a graph $B_k$ containing a given Burling graph $G$ as an induced subgraph, and the size of that graph $B_k$ may be double exponential in the size of $G$.

We fix a Burling set $(S,\prec,\adj)$ for which we want to construct a strict frame representation $\set{F_x}_{x\in S}$ with $F_x=F(\ell_x,r_x,b_x,t_x)$ for all $x\in S$.
Let ${\rel}={\prec}\cup{\adj}$.
By \Cref{lem:acyclic}, $\rel$ is an acyclic relation on $S$.
Let $\rel^*$ be the transitive closure of $\rel$, that is, for all $x,y\in S$, we have $x\rel^*y$ if and only if $x=x_0\rel\cdots\rel x_k=y$ for some $x_0,\ldots,x_k\in S$ with $k\geq 1$.
Since $\rel$ is acyclic, $\rel^*$ is also acyclic, so it is a strict partial order.

We first show how to find appropriate values $\ell_x$ and $r_x$ for all $x\in S$.
Let $\rel\adj$ be the relational composition of $\rel$ and $\adj$, that is, for all $x,z\in S$, we have $x\rel\adj z$ if and only if there is $y\in S$ such that $x\rel y\adj z$.
Consider the following conditions on a set of $2\size{S}$ distinct symbols $\ell_x$ and $r_x$ with $x\in S$:
\begin{conditions}
\item\label{cond:1} $\alignto{r_x}{r}{\ell_x}<r_x$ for all $x\in S$;
\item\label{cond:2} $\alignto{r_x}{r}{\ell_x}<\alignto{r_x}{l}{\ell_y}$ for all $x,y\in S$ with $y\rel x$;
\item\label{cond:3} $r_x<\alignto{r_x}{l}{r_y}$ for all $x,y\in S$ with $x\prec y$ or $y\adj x$;
\item\label{cond:4} $r_x<\alignto{r_x}{l}{\ell_y}$ for all $x,y\in S$ with $y\rel\adj x$.
\end{conditions}

\begin{lemma}
\label{lem:horizontal}
The minimal binary relation\/ $<$ defined by \crefrange{cond:1}{cond:4} on the set\/ $\set{\ell_x,r_x}_{x\in S}$ is acyclic.
\end{lemma}

\begin{proof}
Suppose not.
Let $C$ be a smallest cycle in $<$.
For any $c\in C$, call an element $c'\in C$ a \emph{predecessor} or a \emph{successor} of $c$ if $c'<c$ or $c<c'$, respectively.
From all elements $x\in S$ with $C\cap\set{\ell_x,r_x}\neq\emptyset$, choose one that is minimal in the order $\rel^*$.
Thus $C\cap\set{\ell_y,r_y}=\emptyset$ for all $y\in S$ with $y\rel^*x$.
This and \crefrange{cond:1}{cond:4} from the definition of $<$ imply that
\begin{itemize}
\item if $\ell_x\in C$, then $r_x$ is the successor of $\ell_x$; in particular, $r_x\in C$;
\item the successor of $r_x$ has form $r_y$ for some $y\in S$ with $x\prec y$;
\item the predecessor of $r_x$ is $\ell_x$ or has form $r_z$ for some $z\in S$ with $x\adj z$;
\item if $\ell_x\in C$, then the predecessor of $\ell_x$ has form $\ell_z$ for some $z\in S$ with $x\rel z$, or $r_z$ for some $z\in S$ with $x\rel\adj z$.
\end{itemize}
This leaves the following options for how the cycle $C$ looks around its intersection with $\set{\ell_x,r_x}$.
\begin{options}
\item $r_z<r_x<r_y$ for some $y,z\in S$ with $x\prec y$ and $x\adj z$.
\Cref{axiom:3} entails $z\prec y$, which implies $r_z<r_y$ by \labelcref{cond:3}, showing that $C-\set{r_x}$ is a cycle in $<$.
\item $\ell_z<\ell_x<r_x<r_y$ for some $y,z\in S$ with $x\prec y$ and $x\rel z$.
\Cref{axiom:1,axiom:3} entail $y=z$, or $y\prec z$, or $z\prec y$.
Now, if $y=z$, then $\ell_z<r_y$ by \labelcref{cond:1}, showing that $C-\set{\ell_x,r_x}$ is a cycle in $<$.
If $y\prec z$, then $\ell_z<\ell_y<r_y$ by \labelcref{cond:2,cond:1}, showing that $C-\set{\ell_x,r_x}\cup\set{\ell_y}$ is a cycle in $<$.
If $z\prec y$, then $\ell_z<r_z<r_y$ by \labelcref{cond:1,cond:3}, showing that $C-\set{\ell_x,r_x}\cup\set{r_z}$ is a cycle in $<$.
\item $r_z<\ell_x<r_x<r_y$ for some $y,z\in S$ with $x\prec y$ and $x\rel\adj z$.
There is $z'\in S$ with $x\rel z'\adj z$.
Since $x\prec y$ and $x\rel z'$, \cref{axiom:1,axiom:3} entail $y=z'$, or $y\prec z'$, or $z'\prec y$, which imply, respectively, $y\adj z$, or $y\rel\adj z$, or $z\prec y$ by \cref{axiom:3}.
Now, if $z\prec y$ or $y\adj z$, then $r_z<r_y$ by \labelcref{cond:3}, showing that $C-\set{\ell_x,r_x}$ is a cycle in $<$.
If $y\rel\adj z$, then $r_z<\ell_y<r_y$ by \labelcref{cond:4,cond:1}, showing that $C-\set{\ell_x,r_x}\cup\set{\ell_y}$ is a cycle in $<$.
\end{options}
Each option leads to a cycle in $<$ smaller than $C$, contradicting the assumption that $C$ is a smallest cycle in $<$.
This shows that the relation $<$ is indeed acyclic.
\end{proof}

By \Cref{lem:horizontal}, the symbols $\ell_x$ and $r_x$ with $x\in S$ can be assigned values $1,\ldots,2\size{S}$ so that \crefrange{cond:1}{cond:4} hold for the assigned values.

Now, we describe how to find appropriate values $b_x$ and $t_x$ for all $x\in S$.
A \emph{parent} of an element $x\in S$ is an element $z\in S$ such that $x\rel z$ and there is no $y\in S$ with $x\rel y\rel z$.
By \crefrange{axiom:1}{axiom:3}, every element of $S$ that is not a root of $(S,\prec,\adj)$ has a unique parent.
Therefore, the set $S$ along with the parent relation forms a (directed) forest such that if $x\rel^*y$, then $y$ is an ancestor of $x$.
To determine $b_x$ and $t_x$ for all $x\in S$, perform a depth-first search on this forest starting from each of the roots (in some order), and record, for each $x\in S$, the discovery time $b_x$ and the finishing time $t_x$.
This leads to an assignment of values $1,\ldots,2\size{S}$ such that
\begin{conditions}[resume]
\item\label{cond:5} $b_x<t_x$ for all $x\in S$;
\item\label{cond:6} $\alignto{b_x}{l}{b_y}<b_x<t_x<t_y$ for all $x,y\in S$ with $x\rel^*y$;
\item\label{cond:7} $\alignto{b_x}{r}{t_x}<b_y$ or $t_y<b_x$ for all $x,y\in S$ with neither $x\rel^*y$ nor $y\rel^*x$.
\end{conditions}

\begin{lemma}
\label{lem:frames}
The family\/ $\set{F(\ell_x,r_x,b_x,t_x)}_{x\in S}$ is a strict frame representation of\/ $(S,\prec,\adj)$.
\end{lemma}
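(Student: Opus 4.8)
The plan is to verify the two defining equivalences and then the two strictness conditions, after first pinning down what the coordinates encode. I would begin with the vertical coordinates: using \crefrange{cond:5}{cond:7}, the intervals $[b_x,t_x]$ form a laminar family mirroring $\rel^*$, in the sense that $b_y<b_x<t_x<t_y$ holds exactly when $x\rel^*y$ (the forward implication is \cref{cond:6}; for the converse, nested intervals are $\rel^*$-comparable by \cref{cond:7}, and the direction is pinned down by \cref{cond:6}), while $\rel^*$-incomparable pairs have disjoint intervals by \cref{cond:7}. Since ${\prec}\cup{\adj}={\rel}\subseteq{\rel^*}$, every $\prec$- or $\adj$-related pair already satisfies the common vertical part $b_y<b_x<t_x<t_y$ of both equivalences.

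Next I would record the horizontal consequences of \crefrange{cond:1}{cond:4}, rewritten in a uniform direction: $\ell_x<r_x$; if $p\rel q$ then $\ell_q<\ell_p$; if $p\prec q$ then $r_p<r_q$ while if $p\adj q$ then $r_q<r_p$; and if $p\rel\adj q$ then $r_q<\ell_p$. These immediately give the easy halves of the equivalences: for $x\prec y$ one reads off $\ell_y<\ell_x<r_x<r_y$, and for $x\adj y$ one reads off $\ell_y<\ell_x$, $r_y<r_x$, and $\ell_x<r_x$.

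The heart of the argument is a trichotomy: for distinct $\rel^*$-comparable elements $x\rel^*y$, exactly one of $x\prec y$, $x\adj y$, $x\rel\adj y$ holds, and these match respectively the three horizontal patterns $\ell_y<\ell_x<r_x<r_y$ (nested), $\ell_y<\ell_x<r_y<r_x$ (overlapping), and $\ell_y<r_y<\ell_x$ (disjoint). Mutual exclusiveness is combinatorial: ${\prec}$ and ${\adj}$ are disjoint, and each of $x\prec y$, $x\adj y$ precludes $x\rel\adj y$ by a short derivation from \cref{axiom:2,axiom:3,axiom:4} (a witness $x\rel w\adj y$ would, combined with $x\prec y$ or $x\adj y$, force $w\prec w$ or $w\adj w$). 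Exhaustiveness I would prove by induction on the length of a $\rel$-chain from $x$ to $y$, combining the relation of an endpoint with the next chain step through transitivity of $\prec$ (\cref{axiom:5}) and \cref{axiom:3,axiom:4}. The delicate point, which I expect to be the \emph{main obstacle}, is collapsing composed relations when the inductive step produces $x\rel\adj w$ together with $w\rel y$ (in particular the consecutive-$\adj$ case): here one must return to the three-fold alternative for $(x,y)$, and this is where finiteness of the strict partial order $\rel^*$ and repeated use of \cref{axiom:3,axiom:4} are needed. The overlapping pattern for $x\adj y$ — i.e.\ the one inequality $\ell_x<r_y$ not handed to us by Step~2 — is then obtained from \cref{cond:4}: the disjoint alternative $r_y<\ell_x$ corresponds to $x\rel\adj y$, which the trichotomy excludes.

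With the trichotomy in hand, both equivalences follow. The forward directions are Step~2 together with the vertical part from Step~1; for a converse I read the combinatorial relation off the horizontal pattern, e.g.\ $\ell_y<\ell_x<r_x<r_y$ together with $x\rel^*y$ excludes the $\adj$- and $\rel\adj$-types and so leaves $x\prec y$. Finally I would check strictness. The only intersecting pairs are the $\adj$-related ones: nested and disjoint pairs do not intersect, and $\rel^*$-incomparable pairs are vertically separated by \cref{cond:7}; moreover each $\adj$-pair has exactly the shape demanded by the first strictness condition. For the forbidden triple, a configuration $\ell_i<\ell_j<\ell_k<r_i<r_j$ with $b_i<b_j<b_k<t_k<t_j<t_i$ would force $k\rel^*j\rel^*i$ vertically and, by the converse just used, $j\adj i$ horizontally; applying the trichotomy to $(k,j)$ rules out $k\rel\adj j$ (since $\ell_k<r_j$) and hence yields $k\rel j$, so $k\rel j\adj i$ gives $k\rel\adj i$ and therefore $r_i<\ell_k$ by \cref{cond:4}, contradicting $\ell_k<r_i$. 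Everything outside the trichotomy's exhaustiveness is bookkeeping over \crefrange{cond:1}{cond:7} and \crefrange{axiom:1}{axiom:5}.
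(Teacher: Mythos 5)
There is a genuine gap, and it sits exactly where you yourself predicted the ``main obstacle'': the trichotomy is false. Exhaustiveness already fails for the Burling set $S=\set{x,a,b,y}$ with ${\prec}=\emptyset$ and ${\adj}=\set{(x,a),(a,b),(b,y)}$ (all of \crefrange{axiom:1}{axiom:5} hold vacuously): here $x\rel^*y$, yet none of $x\prec y$, $x\adj y$, $x\rel\adj y$ holds, since the only $w$ with $w\adj y$ is $b$, and $x\rel b$ fails. So the induction you sketch cannot close---the statement it targets is untrue, and no combination of \cref{axiom:3,axiom:4,axiom:5} will repair it. The paper's proof is structured precisely to avoid this: it never classifies the pair $(x,y)$ itself, but takes a shortest chain $x=x_0\rel\cdots\rel x_k=y$, shows via transitivity of $\prec$ and \cref{axiom:4} that minimality forces the normal form $x\rel x_1\adj x_2\adj\cdots\adj x_k$, and then obtains horizontal disjointness $r_y=r_{x_k}<\cdots<r_{x_2}<\ell_x$ by applying \cref{cond:3} along the $\adj$-tail and \cref{cond:4} just once, to $x\rel x_1\adj x_2$---not to the pair $(x,y)$. (In the example above: $r_y<r_b$ by \cref{cond:3} and $r_b<\ell_x$ by \cref{cond:4}, even though $x\rel\adj y$ is false.) Your third class should therefore be ``$x\rel^*y$ but not $x\rel y$'', with disjointness derived from the chain.

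Second, your derivation of $\ell_x<r_y$ for $x\adj y$ is not valid as reasoned: \crefrange{cond:1}{cond:7} are one-way constraints on the chosen assignment and say nothing about symbol pairs they do not mention, so concluding $\ell_x<r_y$ from ``$x\rel\adj y$ is excluded'' uses the \emph{converse} of \cref{cond:4}, which nothing in the construction supplies. Concretely, already for the two-element Burling set with $x\adj y$, the relation $<$ generated by \crefrange{cond:1}{cond:4} leaves $\ell_x$ and $r_y$ incomparable, so no elimination argument over combinatorial relation types can settle their order; this inequality must be extracted from the constraint system itself (the paper reads it off \cref{cond:2,cond:3}), not inferred by excluding alternatives. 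The same caution applies wherever you ``read the combinatorial relation off the horizontal pattern'': such converses are legitimate only after each of the four classes of pairs ($\prec$-related, $\adj$-related, $\rel^*$-comparable but unrelated, $\rel^*$-incomparable) has been shown to force a signature incompatible with the other three. Your vertical step is sound precisely because \cref{cond:6,cond:7} together exhaust all pairs; horizontally, the exhaustive case analysis is the paper's chain argument, not your trichotomy. Your explicit forbidden-triple check---a point the paper leaves implicit---is a welcome addition in outline, but as written it too leans on the false trichotomy (``rules out $k\rel\adj j$'') and needs the corrected third class to go through.
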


\begin{proof}
\Cref{cond:1,cond:5} ensure that $F(\ell_x,r_x,b_x,t_x)$ is a frame for every $x\in S$.
To prove that the family $\set{F(\ell_x,r_x,b_x,t_x)}_{x\in S}$ is a strict frame representation of $(S,\prec,\adj)$, we first verify the following three conditions for all $x,y\in S$, which imply the conditions from the definition of a strict frame representation of a Burling set:
\begin{alignat*}{2}
&\text{if}\enspace x\prec y{,}\quad&&\text{then}\enspace\ell_y<\ell_x<r_x<r_y\enspace\text{and}\enspace b_y<b_x<t_x<t_y{;}\\
&\text{if}\enspace x\adj y{,}\quad&&\text{then}\enspace\ell_y<\ell_x<r_y<r_x\enspace\text{and}\enspace b_y<b_x<t_x<t_y{;}\\
&\text{if neither}\enspace x\rel y\enspace\text{nor}\enspace y\rel x{,}\quad&&\text{then}\enspace r_x<\ell_y{,}\enspace\text{or}\enspace r_y<\ell_x{,}\enspace\text{or}\enspace t_x<b_y{,}\enspace\text{or}\enspace t_y<b_x{.}
\end{alignat*}
The first two conditions describe the configurations in \Cref{fig:correspondence}, while the last one describes two frames that are disjoint and not nested.

If $x\prec y$, then $\ell_y<\ell_x<r_x<r_y$ by \labelcref{cond:2,cond:3}, and $b_y<b_x<t_x<t_y$ by \labelcref{cond:6}.
Likewise, if $x\adj y$, then $\ell_y<\ell_x<r_y<r_x$ by \labelcref{cond:2,cond:3}, and $b_y<b_x<t_x<t_y$ by \labelcref{cond:6}.
If neither $x\rel^*y$ nor $y\rel^*x$, then $t_x<b_y$ or $t_y<b_x$ by \labelcref{cond:7}.
Finally, suppose $x\rel^*y$ but not $x\rel y$.
Let $x_0,\ldots,x_k\in S$ be a smallest tuple with $x=x_0\rel\cdots\rel x_k=y$ witnessing $x\rel^*y$.
It follows that $k\geq 2$.
If $x_i\prec x_{i+1}$ for some $i\in\set{1,\ldots,k-1}$, then $x_{i-1}\rel x_{i+1}$ by transitivity of $\prec$ or by \cref{axiom:4}, so $x_0,\ldots,x_{i-1},x_{i+1},\ldots,x_k$ is a smaller tuple witnessing $x\rel^*y$, contradicting the assumption that $x_0,\ldots,x_k$ is smallest.
Therefore, we have $x\rel x_1\adj\cdots\adj x_k=y$, whence it follows that $r_y=r_{x_k}<\cdots<r_{x_2}<\ell_x$.

It remains to verify that the family $\set{F(\ell_x,r_x,b_x,t_x)}_{x\in S}$ is strict.
By the above, a configuration of three frames in \Cref{fig:strict} (b)--(d) would correspond to three elements $x,y,z\in S$ with $x\rel y$, $x\rel z$, and $y\adj z$.
However, $x\rel y$, $x\rel z$, and $y\neq z$ imply $y\prec z$ or $z\prec y$ by \crefrange{axiom:1}{axiom:3}.
\end{proof}

The following is a direct corollary to the construction above and \Cref{lem:frames}.

\begin{theorem}
\label{thm:construction}
There is a polynomial-time algorithm that constructs a strict frame representation for a given Burling set.
\end{theorem}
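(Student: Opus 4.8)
The plan is to show that the construction carried out above is entirely algorithmic and runs in time polynomial in $n=\size{S}$, and then to invoke \Cref{lem:frames} for correctness. Concretely, given the Burling set $(S,\prec,\adj)$, I would first form the relation $\rel={\prec}\cup{\adj}$ and compute its transitive closure $\rel^*$ (for instance by a reachability computation from each element), which by \Cref{lem:acyclic} is a strict partial order; this takes $\Oh(n^3)$ time. In the same time bound I would tabulate the composition $\rel\adj$, since both appear in the definition of the horizontal ordering.

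Next I would build the relation $<$ on the $2n$ symbols $\ell_x,r_x$ by listing all the ordered pairs prescribed by \crefrange{cond:1}{cond:4}; as each condition ranges over pairs of elements and $\rel\adj$ has already been tabulated, this produces $\Oh(n^2)$ relations on $2n$ symbols. By \Cref{lem:horizontal} the relation $<$ is acyclic, so a topological sort (e.g.\ Kahn's algorithm) yields in $\Oh(n^2)$ time an assignment of the values $1,\ldots,2n$ to the symbols $\ell_x,r_x$ that respects $<$, that is, satisfies \crefrange{cond:1}{cond:4}.

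For the vertical coordinates, I would compute the parent of each non-root element—the unique $z$ with $x\rel z$ and no intermediate $y$ satisfying $x\rel y\rel z$, whose uniqueness is guaranteed by \crefrange{axiom:1}{axiom:3}—thereby assembling the parent forest in polynomial time. A depth-first search started from the roots records the enter and exit times $b_x$ and $t_x$; the standard properties of DFS on a forest directly give \crefrange{cond:5}{cond:7}, because $x\rel^*y$ is exactly the ancestor relation in this forest. With all four coordinates in hand, \Cref{lem:frames} certifies that $\set{F(\ell_x,r_x,b_x,t_x)}_{x\in S}$ is a strict frame representation of $(S,\prec,\adj)$.

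I do not anticipate a genuine obstacle, since the substantive content—acyclicity of $<$ and correctness of the assembled frames—is already discharged by \Cref{lem:horizontal,lem:frames}. The only care needed is bookkeeping: ensuring that $\rel^*$, the composition $\rel\adj$, and the parent forest are computed consistently, so that the topological sort and the DFS genuinely produce assignments meeting \crefrange{cond:1}{cond:7}. As each of these is a textbook graph computation bounded by $\Oh(n^3)$, the overall algorithm is polynomial, which is precisely what the theorem asserts.
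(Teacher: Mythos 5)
Your proposal is correct and takes essentially the same approach as the paper, which likewise obtains the theorem as a direct corollary of the construction in \Cref{sec:construction}: topologically sort the relation $<$ (acyclic by \Cref{lem:horizontal}) to get the horizontal coordinates, use DFS enter/exit times on the parent forest for the vertical ones, and invoke \Cref{lem:frames} for correctness. Your explicit $\Oh(n^3)$ bookkeeping is a valid (if generous) bound; the paper only adds in passing that the algorithm can even run in $\Oh(\size{S}+\size{\rel})$ time by relaxing \cref{cond:4} to suitably chosen pairs.
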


The algorithm can be made to work in $\Oh(\size{S}+\size{\rel})$ time by relaxing \cref{cond:4} only to at most $\size{\rel}$ pairs $x,y\in S$ with $y\rel\adj x$.
Specifically, for all $y,z\in S$ with $y\rel z$, \cref{axiom:2} and acyclicity of $\prec$ imply that the elements $x\in S$ with $z\adj x$ are totally ordered by $\prec$, and the requirement that $r_x<\ell_y$ if $y\rel\adj x$ is redundant for all but the maximal one in that order.
Indeed, if $x'\prec x$, then $r_{x'}<r_x$ by \labelcref{cond:3}, so $r_x<\ell_y$ implies $r_{x'}<\ell_y$.
With that modification, the relation $<$ defined by \crefrange{cond:1}{cond:4} has size $\Oh(\size{S}+\size{R})$.
Its topological sort used to determine the values of $\ell_x$ and $r_x$ as well as the depth-first search used to determine the values of $b_x$ and $t_x$ clearly work in linear time.

\Cref{thm:recognition-frames} follows directly from \Cref{thm:recognition-abstract,thm:construction}.

\section{Maximum independent set}
\label{sec:mis}

In this section, we present an algorithm for the maximum (weight) independent set problem in Burling graphs.
The core of the algorithm is a dynamic programming scheme developed by Gavril \cite{Gav73,Gav00} (see also~\cite{CS03}) which reduces, for any class of families of geometric objects $\famF$, the maximum (weight) independent set problem in the \emph{overlap graphs} of families in $\famF$ to the maximum weight independent set problem in the \emph{intersection graphs} of families in $\famF$.
In our case, $\famF$ is the class of strict families of axis-parallel rectangles (defined just like strict families of frames except that rectangles include their interiors while frames do not).
Strict frame graphs are the overlap graphs of families in $\famF$, while the intersection graphs of families in $\famF$ turn out to be chordal.

Say that a relation $\rel$ on a set $S$ is \emph{chordal} when it is acyclic and the following holds for all $x,y,z\in S$: if $x\rel y$, and $x\rel z$, and $y\neq z$, then $y\rel z$ or $z\rel y$.
This term relates to well-known properties of chordal graphs.
In particular, $(S,\edgeset{\rel})$ is a chordal graph when $\rel$ is a chordal relation on $S$, and vice versa---for every chordal graph $G$, orienting every edge according to a so-called perfect elimination order gives rise to a chordal relation on the vertex set.
Apart from the next theorem, we will not use any particular properties of chordal graphs, so we omit further details.

\begin{theorem}[Frank~\cite{Fra76}]
\label{thm:chordal}
There is a linear-time algorithm that, given a non-empty finite set\/ $S$, a chordal relation\/ $\rel$ on\/ $S$, and a weight assignment\/ $\weight\colon S\to\setR$, computes a maximum independent set in the graph\/ $(S,\edgeset{\rel})$ with respect to the weight assignment\/ $\weight$.
\end{theorem}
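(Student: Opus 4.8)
The plan is to reduce the abstract chordal relation to a perfect elimination ordering of the graph $(S,\edgeset{\rel})$ and then run the standard weighted independent-set dynamic program for chordal graphs, exploiting the fact that in a chordal graph an independent set meets each clique in at most one vertex. First I would compute, in linear time, a topological order $v_1,\ldots,v_n$ of the acyclic relation $\rel$, so that $x\rel y$ implies that $x$ precedes $y$. For a vertex $v$ write $N^+(v)=\set{y\in S\colon v\rel y}$ for its set of out-neighbors. The later neighbors of $v_i$ in this order are exactly the vertices of $N^+(v_i)$, and the chordality condition says precisely that any two members of $N^+(v_i)$ are comparable in $\rel$; hence $N^+(v_i)$, and therefore $C_i=\set{v_i}\cup N^+(v_i)$, induces a clique in $(S,\edgeset{\rel})$. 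Thus $v_1,\ldots,v_n$ is a perfect elimination ordering and $(S,\edgeset{\rel})$ is a chordal graph.

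Next I would organize these cliques into a tree decomposition. Define the parent $p(v)$ of a non-sink vertex $v$ to be the $\rel$-smallest element of $N^+(v)$; since $N^+(v)$ is a chain with minimum $p(v)$, every other out-neighbor of $v$ is also an out-neighbor of $p(v)$, that is, $N^+(v)\setminus\set{p(v)}\subseteq N^+(p(v))$. The parent pointers form a forest rooted at the sinks of $\rel$, and an induction along parent chains shows that whenever $u\rel w$ the element $w$ is an ancestor of $u$ in this forest. Consequently the bags $C_v=\set{v}\cup N^+(v)$ satisfy the tree-decomposition axioms along the elimination forest: each bag induces a clique, each edge lies in a bag, and for every $x$ the bags containing $x$ form a connected subtree. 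The last point is the running-intersection property, and it is exactly what the inclusion above provides.

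With this structure in hand, the dynamic program is the usual tree-decomposition algorithm for maximum weight independent set, sped up by the clique property: an independent set meets each bag $C_v$ in at most one vertex, so a bag has only $\size{C_v}+1=\size{N^+(v)}+2$ relevant partial states (choose one element of $C_v$, or none) rather than exponentially many. Processing the forest from the leaves to the roots, I would compute for each vertex $v$ and each admissible choice on $C_v$ the maximum weight of an independent set in the subtree $T(v)$ consistent with that choice, combining the tables of the children by matching their choices on the shared separators. Vertices of non-positive weight are simply never chosen, which handles real (possibly negative) weights; the optimum over the whole forest is then read off at the roots.

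The main obstacle is the bookkeeping needed to keep the algorithm both correct and linear. Because a single element $x$ appears in the bag $C_x$ and in the bag of every descendant that points to $x$, I must charge the weight $\weight(x)$ exactly once---naturally, to the bag $C_x$ that $x$ owns as its minimal bag---and ensure that a vertex selected in a separator is forbidden consistently in precisely the one child subtree that contains it, which is where the running-intersection property enters the correctness proof. For linearity, I would argue that each element and each related pair is touched a constant number of times during the bottom-up sweep: the total size of all bags is $\Oh(\size{S}+\size{\rel})$, and restricting each bag to at most one chosen vertex keeps the per-bag work proportional to its size. Combining the topological sort, the forest construction, and this single linear pass yields the claimed linear-time algorithm, reproving Frank's result in the language of chordal relations.
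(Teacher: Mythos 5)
The paper contains no internal proof of this theorem: it is imported as a black box from Frank~\cite{Fra76}, and the authors explicitly remark that apart from this statement they use no particular properties of chordal graphs. So there is nothing in the paper to compare your argument against, and it must be judged on its own. On those terms it is essentially correct and follows the standard clique-forest route: a topological order of the acyclic relation $\rel$ is a perfect elimination ordering, because for each $v$ the out-neighborhood $N^+(v)=\set{y\in S\colon v\rel y}$ is, by chordality, a tournament, and an acyclic tournament is a chain, so $N^+(v)$ induces a clique; the parent $p(v)$, defined as the $\rel$-minimum of $N^+(v)$, then satisfies $N^+(v)-\set{p(v)}\subseteq N^+(p(v))$, which is exactly the running-intersection property for the bags $C_v=\set{v}\cup N^+(v)$, and the one-vertex-per-bag dynamic program over the resulting clique forest is sound, including for negative weights. (Incidentally, your parent forest is the same object the paper constructs in \Cref{sec:construction}, where uniqueness of the parent follows from the Burling axioms rather than from chordality.) Note that this is a genuinely different proof from Frank's own, which is a greedy scheme along the elimination order rather than a tree-decomposition dynamic program; both achieve linear time, but yours needs two repairs.

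First, your claim that a vertex selected in a separator is forbidden ``in precisely the one child subtree that contains it'' is false: for every child $c$ of a node $v$ one has $v=p(c)\in N^+(c)$, so a vertex---$v$ itself, for instance---typically occurs in the bags of many children of a single node. This does not hurt correctness, because the mechanism you actually describe, matching states on every parent--child separator, already forces global consistency along the connected subtree of bags containing a given vertex; you should simply delete the uniqueness claim. Second, the linear-time bound needs one observation you do not make explicit: naively, combining tables at a node $v$ costs $\size{C_v}$ queries per child, which is not linear when $v$ has many children. It is rescued by noting that the separator between $c$ and $p(c)$ is exactly $N^+(c)$ (indeed $c\notin C_{p(c)}$ by acyclicity and $N^+(c)\subseteq C_{p(c)}$ by the inclusion above), so the separators have total size at most $\size{\rel}$; precompute for each child the best value over its two states whose trace on the separator is empty (choose $c$, or choose nothing), sum these over all children as a baseline, and for each state $x$ at $v$ apply corrections only over the children whose separator contains $x$. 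Each pair of $\rel$ is then touched $\Oh(1)$ times, giving the claimed linear running time.
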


Let $(V,\prec,\adj)$ be a Burling set, and let $G=(V,\edgeset{\adj})$.
Let $\weight\colon V\to\setR$ be a weight assignment according to which we want to compute a maximum independent set in $G$.
For a set $S\subseteq V$, let $\weight(S)=\sum_{x\in S}\weight(x)$.
Let ${\rel}={\prec}\cup{\adj}$.
\Cref{lem:acyclic} and \crefrange{axiom:1}{axiom:3} imply that the relation $\rel$ is chordal.
Let $V_u=\set{x\in V\colon x\prec u}$ for all $u\in V$.
Note that if $x\in V_u$, then $V_x\subset V_u$.
The algorithm applies the dynamic programming technique on subproblems of the form $\Indep(S)$ defined for $S=V$ and $S=V_u$ for all $u\in V$, where the subproblem $\Indep(S)$ asks for a maximum independent set in the graph $(S,\edgeset{{\adj}|_S})$ with respect to the weight assignment $\weight$.
The \emph{size} of such a subproblem is the size of $S$.
A \emph{solution} to a subproblem is a maximum independent set being asked for.

\begin{algorithm}{$\Indep(S)$}
\item If $S=\emptyset$, then stop and report the empty set as a solution, otherwise continue.
\item For every $u\in S$, let $I_u$ be a solution to the subproblem $\Indep(V_u)$, and let $\weight^*(u)=\weight(u)+\weight(I_u)$.
\item Compute a maximum independent set $I^*$ in the graph $(S,\edgeset{{\rel}|_S})$ with weight assignment $\weight^*$, using the algorithm claimed in \Cref{thm:chordal}.
\item Report the set $I^*\cup\bigcup_{u\in I^*}I_u$ as a solution to $\Indep(S)$.
\end{algorithm}

\begin{lemma}[cf.\ {\cite[Lemma~2]{Gav00}}]
\label{lem:independent}
The algorithm to solve\/ $\Indep(S)$ is correct assuming that the smaller subproblems have been correctly solved.
\end{lemma}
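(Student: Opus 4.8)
The plan is to prove both halves of optimality: that the reported set $I^*\cup\bigcup_{u\in I^*}I_u$ is an independent set in $(S,\edgeset{{\adj}|_S})$ of weight $\weight^*(I^*)$, and that every independent set in that graph has weight at most $\weight^*(I^*)$. Throughout I would exploit that $\prec$ and $\adj$ are disjoint and that ${\rel}={\prec}\cup{\adj}$ is acyclic by \Cref{lem:acyclic}, so that $x\prec u$ rules out both $u\adj x$ and $x\adj u$; hence $u$ has no $G$-edge to any element of $V_u$, each $\set{u}\cup I_u$ is independent in $G$, and $\weight(u)+\weight(I_u)=\weight^*(u)$.

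For the feasibility-and-weight half, I would first establish disjointness. For distinct $u_1,u_2\in I^*$, independence of $I^*$ in $(S,\edgeset{{\rel}|_S})$ gives neither $u_1\prec u_2$ nor $u_2\prec u_1$, so \cref{axiom:1} forces $V_{u_1}\cap V_{u_2}=\emptyset$; thus the sets $I_u$ for $u\in I^*$ are pairwise disjoint and also disjoint from $I^*$. Next I would rule out $\adj$-edges between these pieces: an edge joining $x\in I_{u_1}$ to $y\in I_{u_2}$ with $u_1\neq u_2$ (so $x\prec u_1$ and $y\prec u_2$) would, by \cref{axiom:3,axiom:4}, yield $u_1\prec u_2$ or $u_2\prec u_1$, contradicting independence of $I^*$; the same axioms rule out edges between $I^*$ and $\bigcup_{u\in I^*}I_u$, while edges inside a single $I_u$ are excluded by inductive correctness. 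Since $I^*$ carries no $\adj$-edge either, the reported union is independent in $G[S]$, and by disjointness its weight is $\sum_{u\in I^*}(\weight(u)+\weight(I_u))=\weight^*(I^*)$.

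The optimality half is the crux, and the step I expect to be the main obstacle. Given any independent set $J$ of $(S,\edgeset{{\adj}|_S})$, I would let $T$ be the set of $\prec$-maximal elements of $J$. Then $T$ is independent in the chordal graph $(S,\edgeset{{\rel}|_S})$, because $T\subseteq J$ contains no $\adj$-edge and, by maximality, no $\prec$-edge. The key structural point is that each $v\in J$ either lies in $T$ or satisfies $v\prec w$ for a \emph{unique} $w\in T$: existence comes from choosing a $\prec$-maximal element of $J$ above $v$ (using finiteness and transitivity of $\prec$), and uniqueness follows since $v\prec w_1$ and $v\prec w_2$ with distinct $w_1,w_2\in T$ would force $w_1\prec w_2$ or $w_2\prec w_1$ by \cref{axiom:1}, against maximality. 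This yields a partition $J=T\sqcup\bigsqcup_{w\in T}J_w$ with $J_w=\set{v\in J\colon v\prec w}\subseteq V_w$, each $J_w$ independent in $G[V_w]$. Inductive correctness gives $\weight(J_w)\le\weight(I_w)$, so $\weight(J)\le\sum_{w\in T}(\weight(w)+\weight(I_w))=\weight^*(T)\le\weight^*(I^*)$, the final inequality because $I^*$ is a maximum-weight independent set of $(S,\edgeset{{\rel}|_S})$ under $\weight^*$. Combined with the first half, this certifies the reported set as a maximum-weight independent set in $G[S]$.
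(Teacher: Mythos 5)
Your proof is correct and takes essentially the same route as the paper's: feasibility of the reported set via the Burling axioms (which the paper packages as chordality of ${\rel}={\prec}\cup{\adj}$), and optimality by passing from an arbitrary independent set $J$ to its $\prec$-maximal elements (your $T$ is the paper's $J^*$), partitioning $J$ below them, and combining inductive correctness with maximality of $I^*$ under $\weight^*$. Your explicit disjointness arguments and the identity $\weight\bigl(I^*\cup\bigcup_{u\in I^*}I_u\bigr)=\weight^*(I^*)$ merely spell out details the paper leaves implicit.
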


\begin{proof}
Let $I$ be the set reported by the algorithm as a solution to $\Indep(S)$.
Thus $I=\bigcup_{u\in I^*}(I_u\cup\set{u})$.
For every $u\in S$, since $I_u\subseteq V_u=\set{x\in V\colon x\prec u}\subseteq S$ and the set $I_u$ is independent in $(V,\edgeset{\adj})$, so is the set $I_u\cup\set{u}$.
Suppose $I_u\cup\set{u}\ni x\adj y\in I_v\cup\set{v}$ for some distinct $u,v\in I^*$.
Since $x\adj y$ and $y=v$ or $y\prec v$, \cref{axiom:4} entails $x\rel v$.
Since $x=u$ or $x\prec u$, $x\rel v$, and $u\neq v$, \cref{axiom:1} or \labelcref{axiom:3} entails $u\rel v$ or $v\rel u$, which contradicts the assumption that $I^*$ is independent in $(S,\edgeset{{\rel}|_S})$.
This shows that $I$ is an independent set in $(S,\edgeset{{\adj}|_S})$.

Now, let $J$ be an arbitrary independent set in $(S,\edgeset{{\adj}|_S})$.
It follows that $\weight(J\cap V_u)\leq\weight(I_u)$ for all $u\in S$.
Let $J^*=\{u\in J\colon$there is no $v\in J$ with $u\prec v\}$.
It follows that $J^*$ is an independent set in $(S,\edgeset{{\rel}|_S})$, so $\weight^*(J^*)\leq\weight^*(I^*)$.
Since $J=J^*\cup\bigcup_{u\in J^*}(J\cap V_u)$, we conclude that
\[\weight(J)\leq\weight(J^*)+\sum_{u\in J^*}\weight(J\cap V_u)\leq\sum_{u\in J^*}\weight(u)+\sum_{u\in J^*}\weight(I_u)=\sum_{u\in J^*}\weight^*(u)=\weight^*(J^*)\leq\weight^*(I^*){.}\]
This shows that $I$ is a maximum independent set in $(S,\edgeset{{\adj}|_S})$ with respect to the weight assignment $\weight$.
\end{proof}

The subproblem $\Indep(V)$ asks for a maximum independent set in the graph $(V,\edgeset{\adj})$.
It is clear that the algorithm to solve a subproblem of the form $\Indep(S)$ runs in polynomial time assuming that solutions to smaller subproblems have been computed beforehand.
Since there are $\size{V}+1$ subproblems, the total running time spent to solve $\Indep(V)$ is polynomial.
Hence, we arrive at the following conclusion.

\begin{theorem}
\label{thm:mis-abstract}
There is a polynomial-time algorithm that, given a Burling set\/ $(V,\prec,\adj)$ and a weight assignment\/ $\weight\colon V\to\setR$, computes a maximum independent set in the graph\/ $(V,\edgeset{\adj})$ with respect to the weight assignment\/ $\weight$.
\end{theorem}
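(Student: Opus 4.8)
The plan is to read off the theorem as the outcome of running the dynamic-programming algorithm $\Indep$ on the top-level subproblem $\Indep(V)$, with correctness supplied by \Cref{lem:independent} and efficiency by a count of subproblems together with Frank's routine. First I would record the enabling structural fact: the relation ${\rel}={\prec}\cup{\adj}$ is chordal. Acyclicity is \Cref{lem:acyclic}, and the defining implication follows from \crefrange{axiom:1}{axiom:3} by a short case check---given $x\rel y$ and $x\rel z$ with $y\neq z$, one of \cref{axiom:1,axiom:2,axiom:3} applies according to whether each of the two relations is a $\prec$- or an $\adj$-relation, and in every case yields $y\prec z$ or $z\prec y$. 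Since both acyclicity and the chordality implication are inherited by any restriction ${\rel}|_S$, \Cref{thm:chordal} applies to every graph $(S,\edgeset{{\rel}|_S})$ encountered in step~3 of the algorithm, so that step runs in linear time.

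Next I would check that the recursion is well founded, so that the subproblems can be solved in order of increasing size. The only subproblems are $\Indep(V)$ and $\Indep(V_u)$ for $u\in V$, where $V_u=\set{x\in V\colon x\prec u}$. Because $x\in V_u$ forces $V_x\subset V_u$ (transitivity of $\prec$), each recursive reference $\Indep(V_u)$ made while solving $\Indep(S)$ is to a strictly smaller subproblem. Hence, when the algorithm reaches $\Indep(S)$, every solution $I_u$ it consults has already been computed, and an induction on subproblem size applies: the base case $S=\emptyset$ is handled directly, and the inductive step is exactly the content of \Cref{lem:independent}. Running the algorithm on $\Indep(V)$ therefore returns a maximum-weight independent set of $(V,\edgeset{{\adj}|_V})=(V,\edgeset{\adj})$ with respect to $\weight$.

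For the running time, there are exactly $\size{V}+1$ subproblems, and solving one $\Indep(S)$ costs only the lookup of the precomputed sets $I_u$ and weights $\weight^*(u)$ for $u\in S$, a single call to the linear-time algorithm of \Cref{thm:chordal} on $(S,\edgeset{{\rel}|_S})$, and the assembly of $I^*\cup\bigcup_{u\in I^*}I_u$; each of these is polynomial in $\size{V}$, so the total is polynomial. I do not expect a genuine obstacle here: \Cref{lem:independent} already carries the substantive argument, and the only additional ingredient needed to turn it into a full proof of the theorem is the chordality of ${\rel}$ that licenses the appeal to \Cref{thm:chordal}. Thus the ``hard part'', such as it is, is purely the structural observation linking the Burling-set axioms to chordal relations, after which the theorem is a routine induction-plus-bookkeeping corollary of \Cref{lem:independent}.
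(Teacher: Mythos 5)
Your proposal is correct and follows the paper's own argument essentially verbatim: chordality of ${\prec}\cup{\adj}$ from \Cref{lem:acyclic} and \crefrange{axiom:1}{axiom:3} (your four-way case check is exactly the short verification the paper leaves implicit), correctness of the dynamic program by induction on subproblem size via \Cref{lem:independent}, and polynomiality from the $\size{V}+1$ subproblems together with Frank's linear-time routine (\Cref{thm:chordal}). No gaps; nothing to add.
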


\Cref{thm:mis-burling} follows directly from \Cref{thm:recognition-abstract,thm:mis-abstract}.

\section{Concluding Remarks}
\label{sec:conclusion}

Intersection graphs of axis-parallel rectangles in the plane are \classNP-hard to recognize~\cite{Kra94}.
An easy adaptation of the proof in~\cite{Kra94} shows that frame graphs, i.e., graphs admitting some (not necessarily strict) frame representation, are also \classNP-hard to recognize.
The maximum independent set problem is \classNP-hard for intersection graphs of axis-parallel unit squares~\cite{FPT81} and for intersection graphs of horizontal and vertical segments~\cite{KN90}, either of which directly implies \classNP-hardness for frame graphs.
The situation is different for \emph{directed frame graphs}, i.e., graphs with frame representations in which every intersecting pair of frames looks like in \Cref{fig:strict}~(a) (which is the first condition from the definition of \emph{strict}).
For directed frame graphs, the relation $\adj$ defined as in \Cref{fig:correspondence} is chordal, and therefore the maximum independent set problem can be solved in polynomial time in the same way as described in \Cref{sec:mis} provided that the corresponding relations $\prec$ and $\adj$ are given as part of the input.
This leaves the following question.

\begin{problem}
What is the complexity of recognizing directed frame graphs?
\end{problem}

Burling graphs are induced subgraphs of Burling's construction of triangle-free high-chromatic graphs.
What is the complexity of recognition and of computing a maximum independent set for other known constructions of triangle-free high-chromatic graphs?
This is not interesting for Mycielski's construction~\cite{Myc55}, which contains all triangle-free graphs as induced subgraphs~\cite{CGL06}.
For induced subgraphs of Zykov's construction~\cite{Zyk49} and Blanche Descartes' construction~\cite{Des54}, both problems are \classNP-hard~\cite{MTTW26}.
For twin-cut graphs, i.e., induced subgraphs of the construction in~\cite{BBD+23}, recognition is in \classP\ while computing a maximum independent set is \classNP-hard~\cite{BBD+unpub}.

We will show in a follow-up work that, by contrast to the maximum independent set problem, the $k$-coloring problem remains \classNP-hard for Burling graphs for every $k\geq 3$, as it is for the other above-mentioned classes~\cite{BBD+unpub,MTTW26}.
This leads to the following analog of the Thomassé--Trotignon--Vušković question from~\cite{TTV17}.

\begin{problem}
Is every hereditary class of graphs that admits a polynomial-time $3$-coloring algorithm $\chi$-bounded (assuming $\classP\neq\classNP$)?
\end{problem}

As we have mentioned in the introduction, the recent flurry of research on Burling graphs has been mostly motivated by the conjecture of Scott~\cite{Sco97} that the class $\famF(H)$ of graphs excluding induced subdivisions of $H$ is $\chi$-bounded for every graph $H$.
Clearly, if no subdivision of $H$ is a Burling graph, then Scott's conjecture fails for $H$, because $\famF(H)$ contains all graphs $B_1,B_2,\ldots$ from Burling's construction.
Chudnovsky, Scott, and Seymour~\cite{CSS21} conjectured that for every $H$, the chromatic number of graphs $G\in\famF(H)$ is bounded by a function of the clique number of $G$ and the maximum $k$ such that $G$ contains an induced subgraph isomorphic to $B_k$.
This conjecture, if true, implies that if some subdivision of $H$ is a Burling graph, then Scott's conjecture holds for $H$.
Our recognition algorithm for Burling graphs can be adapted to a polynomial-time recognition algorithm of graphs $H$ such that some subdivision of $H$ is a Burling graph.
We omit the details, as this direction seems less compelling while the Chudnovsky--Scott--Seymour conjecture remains open.
However, very recently, the special case of the conjecture for graphs $G$ that are string graphs has been proved~\cite{ABD+26}.
In particular, we now know that Burling graphs form a minimal hereditary class of graphs with unbounded chromatic number---the only known class of graphs with that property other than the (trivial) class of complete graphs.

\section*{Acknowledgments}

We thank Martin Milanič for bringing the question of Thomassé, Trotignon, and Vušković to our attention.
We thank Jonathan Rollin and the anonymous reviewers for helpful comments.

The project was initiated at the Structural Graph Theory workshop STWOR in September 2023, which was a part of STRUG: Structural Graph Theory Bootcamp, funded by the ``Excellence initiative -- research university (2020--2026)'' of University of Warsaw.
The main part of the work was done during the workshop Homonolo 2023.
We thank the participants and the organizers of both events for a productive and inspiring atmosphere.

\bibliographystyle{plain}
\bibliography{burling}

\end{document}